\def\url@leostyle{%
  \@ifundefined{selectfont}{\def\UrlFont{\sf}}{\def\UrlFont{\small\ttfamily}}}
\def\abar{\bar{a}}
\def\bbar{\bar{b}}
\def\cbar{\bar{c}}
\def\hbar{\bar{h}}
\def\vbar{\bar{v}}
\def\xbar{\bar{x}}
\def\ybar{\bar{y}}
\def\lg{{\rm lg}}
\def\phi{\varphi}
\def\QQ{\mathbb{Q}}
\def\ZZ{\mathbb{Z}}
\newcommand{\seq}[2][\omega]{\langle{#2}_i\rangle_{i\in {#1}}}
\newcommand{\bbd}[1]{\mathbb{#1}}
\title{dp-rank and forbidden configurations}
\author{Hunter Johnson}\thanks{Dept. Math \& CS, John Jay College, CUNY, 444 W. 59th St., New York, NY 10019.}
\thanks{hujohnson@jjay.cuny.edu.} 
\thanks{
{Keywords:} Model theory, VC dimension, NIP, dp-minimal, VC density}
\thanks{{MSC codes:} 12L12, 03C45, 03C98}
\begin{document}

\begin{abstract}
 A theory $T$ is shown to have an ICT pattern of depth $k$ in $n$ variables iff it interprets some $k$-maximum VC class in $n$ parameters.
\end{abstract}

\maketitle

\section{Introduction} \label{S:S1}

We begin with the definition of an ICT pattern.  Our definition comes from Adler \cite{Ad07} as adapted from Shelah.  The definition assumes an ambient theory $T$ in a language $L$, and a monster model from which the parameters are taken. 

\begin{definition}
 For an cardinal $\kappa$, an \textit{ICT pattern} of depth $\kappa$ in variables $\xbar$ is a set of formulas $\{\psi_\alpha(\xbar;\ybar_\alpha): \alpha < \kappa\}$ together with an array  $\langle \bbar_n^\alpha: \alpha < \kappa, n < \omega \rangle$ such that $\lg(\bbar_n^\alpha) = \lg(\ybar_\alpha)$ and for any $\eta: \kappa \rightarrow \omega$, the set of formulas
\begin{equation}
 \{\psi_\alpha(\xbar;\bbar_{\eta(\alpha)}^\alpha):\alpha <\kappa\} \cup \{\neg \psi_\alpha(\xbar;\bbar_n^\alpha):\alpha <\kappa, n < \omega,\eta(\alpha)\neq n\}
\end{equation}
is consistent.
\end{definition}

 Intuitively an ICT pattern constitutes an array of formulas with $\kappa$ rows and $\omega$ columns, such that for any ``path'' downward through the array it is consistent that exactly the formulas appearing on the path are non-negated.  The acronym stands for \textit{independent contradictory types}.

Though ICT patterns and definitions of other similarly array-based notions (such as INP patterns) appear in Shelah, interest in them partly stems from Onshuus and Usvyatsov \cite{OnUs11}, who extracted from Shelah a simple concept of dp-rank, and in particular dp-minimality.

Shelah investigated a cardinal invariant of a theory $T$, denoted $\kappa_{ict}$, defined as the least infinite cardinal (should it exist) such that $T$ does not admit an ICT pattern of depth $\kappa_{ict}$.  When $\kappa_{ict}$ exists, $T$ is said to be \textit{dependent}, and when $\kappa_{ict} = \aleph_0$, $T$ is said to be \textit{strongly dependent} \cite{Sh07,Sh06}.  Note that because many formulas are involved in the definition of an ICT pattern, strong dependence does not imply a finite bound on the possible depth of an ICT pattern. Nonetheless it is possible to study the properties of finite cardinal bounds as well.  Onshuus and Usvyatsov define \textit{dp-rank} for a partial type $p(\xbar)$ as the maximum cardinal $\kappa$ (possibly finite) such that $p(\xbar)$ is consistent with an ICT pattern in variables $\xbar$ of depth $\kappa$. 

In this paper we generally assume $p(\xbar) = \{\xbar = \xbar\}$, so that we are really considering the dp-rank of a certain sequence of variables $\xbar$.  We define $dpR_T(n)$ as the dp-rank of any partial type $\{\xbar = \xbar\}$ in $T$, where $lg(\xbar)=n$, and all variable symbols occuring in $\xbar$ are distinct.  This is clearly independent of the particular $\xbar$ chosen.  When the theory under consideration is obvious we suppress the dependence on $T$ and simply write $dpR(n)$.

As is frequently the case in model theoretic definitions, dp-rank can be understood in terms of the interpretability of certain set systems in models of the theory.  Another view of dependence for a theory $T$, for example, is that every model of the theory is forbidden from interpreting the power-set of an infinite set.  Stability is well-known to be equivalent to every model of the theory being forbidden from interpreting an infinite chain of sets.  In both cases the interpretation must be uniform; we give a precise description of our notion of interpretation at the end of Section \ref{S:DF}. While these classical concepts are defined on the basis of finite/infinite distinctions, we wish to consider the more fine-grained question of which set families can be interpreted in $M^n$ when $\mcm \models T$ and $dpR_T(n) = k$.

In this paper (see Theorem \ref{T:T2}) we have shown that a cardinality-based property of interpretable set systems is sufficient to characterize $dpR_T(n)$.  The property we consider, the maximum property, can be viewed as a homogeneity condition on VC density (this notion is described in \cite{AsDoHaMaSt11}).  Along the course of our investigation we encounter set systems which are characterized in terms of certain forbidden configurations (see Definition \ref{D:bigD}).  We describe the relation of these forbidden configurations to the alternation properties of a dependent formula, and to dp-rank.

\section{Definitions and basic facts}\label{S:DF}

In this section we introduce notation and give some background on VC classes.  For the purposes of the paper, fix a complete theory $T$ in a language $L$. We consider $L$ formulas $\phi(\xbar;\ybar)$ which are \textit{partitioned} in the sense that the $\ybar$ variables are viewed as parameters.  The semicolon indicates the separation of variables.  We use the symbol $\mcm$ to denote a monster model of $T$. The model $\mcm$ is assumed to be saturated in a high cardinality, and to be sufficiently large to admit an elementary embedding of all other models and sets considered.  We will be interested in combinatorial properties of formulas $\phi(\xbar;\ybar)$. These are sometimes conveniently expressed by considering the family of sets defined by $\phi$ on $\mcm$ as its parameters vary. 

We use the convention that whenever $A \sse M^{|\xbar|}$ and $\bbar \in M^{|\ybar|}$, the symbol $\phi(A;\bbar)$ denotes
$$\phi(A;\bbar) = \{\abar \in A: \mcm \models \phi(\abar;\bbar)\}$$

For $A \sse M^{|\ybar|}$ and $B \sse M^{|\xbar|}$, let $\mathcal{C}_\phi(B)^A=\{\phi(B,\bbar): \bbar \in A\}$. We let $\mathcal{C}_\phi(B)$ where no parameter set is specified implicitly denote $\mathcal{C}_\phi(B)^{M^{|\ybar|}}$. The abbreviation $\mathcal{C}_\phi(\mcm)$ will be used for $\mathcal{C}_\phi(M^{|\xbar|})^{M^{|\ybar|}}$. 

It was observed by Laskowski \cite{L92} that the independence dimension of $\phi(\xbar;\ybar)$ is the Vapnik-Chervonenkis (VC) dimension of $\mathcal{C}_\phi(\mcm)$. We give several definitions related to VC dimension. For a set $X$, we represent the power-set of $X$ by $2^X = \{A:A\sse X\}$.

\begin{definition}
 Let $X$ a set, $A \sse X$, and $\mathcal{C} \sse 2^X$.  Define $\mathcal{C}(A) = \{c\cap A: c\in \mathcal{C}\}$.  Say that $\mathcal{C}$ \textit{shatters} $A$ if $\mathcal{C}(A)=2^A$.  Let the VC dimension of $\mathcal{C}$, denoted VC$(\mathcal{C})$, be defined as $\sup\{|A|: A \sse X, \mathcal{C} \text{ shatters } A\}$.  We say that $\gc$ is a \textit{VC class} if VC($\gc) < \omega$.
\end{definition}

It is clear from Laskowski's observation that $T$ is dependent (or NIP) if and only if every $L$ formula $\phi(\xbar;\ybar)$ induces a definable family in $\mcm$ which is a VC class.

For $n \in \omega$, $d \in \omega$, define $\Phi_d(n) = \sum_{i=0}^d {\binom{n}{i}}$ if $n \geq d$ and $2^n$ otherwise.

The following lemma was discovered independently by Sauer, Perles and Shelah, and in an asymptotic form by Vapnik and Chervonenkis.

\begin{lemma}[Sauer's Lemma \cite{Sa72,Sh72,VaCh71}]
 Suppose $\mathcal{C} \sse 2^X$ for a set $X$. If VC$(\mathcal{C}) = d$, and $A \sse X$ is finite, then $$|\mathcal{C}(A)| \leq \Phi_d(|A|)$$
\end{lemma}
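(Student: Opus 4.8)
The plan is to reduce to a purely finite, combinatorial statement and then argue by induction on $|A|$. Since $\mathcal{C}(A)$ depends only on the traces $c \cap A$, I would discard the ambient $X$ entirely: set $n = |A|$, write $\mathcal{F} = \mathcal{C}(A) \sse 2^A$, and observe that $\mathrm{VC}(\mathcal{F}) \leq \mathrm{VC}(\mathcal{C}) = d$, because any subset of $A$ shattered by $\mathcal{F}$ is automatically shattered by $\mathcal{C}$. Thus it suffices to prove the self-contained claim: whenever $\mathcal{F} \sse 2^A$ shatters no subset of size $d+1$, then $|\mathcal{F}| \leq \Phi_d(n)$. The induction runs on $n$, with the degenerate regime $n \leq d$ (where $\Phi_d(n) = 2^n$ is just the trivial bound $|\mathcal{F}| \leq |2^A|$) and the base case $n = 0$ handled directly.

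For the inductive step I would fix an element $x \in A$ and split $\mathcal{F}$ using two derived families on the smaller ground set $A' = A \setminus \{x\}$: the trace $\mathcal{F}_1 = \mathcal{F}(A') = \{S \setminus \{x\} : S \in \mathcal{F}\}$, and the ``doubled'' family $\mathcal{F}_2 = \{S \sse A' : S \in \mathcal{F} \text{ and } S \cup \{x\} \in \mathcal{F}\}$. The key bookkeeping identity is
\[
|\mathcal{F}| = |\mathcal{F}_1| + |\mathcal{F}_2|,
\]
which I would justify by grouping the members of $\mathcal{F}$ according to the subset $T \sse A'$ lying below them: each such $T$ is covered by one or two members of $\mathcal{F}$, it is counted exactly once in $\mathcal{F}_1$ in either case, and $\mathcal{F}_2$ records precisely the surplus from those $T$ covered twice.

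The heart of the argument is bounding the VC dimensions of the two pieces. Clearly $\mathrm{VC}(\mathcal{F}_1) \leq d$, since a set shattered by the trace is shattered by $\mathcal{F}$. The crucial — and I expect most error-prone — step is that $\mathrm{VC}(\mathcal{F}_2) \leq d - 1$: if $\mathcal{F}_2$ shatters some $B \sse A'$, then because membership in $\mathcal{F}_2$ forces both $S$ and $S \cup \{x\}$ into $\mathcal{F}$, the family $\mathcal{F}$ in fact shatters $B \cup \{x\}$, so $|B| + 1 \leq d$. Granting these two bounds, the inductive hypothesis yields $|\mathcal{F}| \leq \Phi_d(n-1) + \Phi_{d-1}(n-1)$, and I would finish with Pascal's identity $\binom{n-1}{i} + \binom{n-1}{i-1} = \binom{n}{i}$ to collapse the right-hand side to $\sum_{i=0}^d \binom{n}{i} = \Phi_d(n)$.

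Finally I would check that the recursion respects the piecewise definition of $\Phi_d$, verifying that $\Phi_d(n-1) + \Phi_{d-1}(n-1) = \Phi_d(n)$ persists (or degenerates correctly to the $2^n$ bound) across the boundary $n \approx d$, so the Pascal step is valid in every regime. As an alternative kept in reserve, the same projection induction proves the sharper Pajor/Sauer--Shelah inequality that $|\mathcal{F}|$ is at most the number of subsets shattered by $\mathcal{F}$; since every shattered set then has size $\leq d$, there are at most $\sum_{i=0}^d \binom{n}{i}$ of them, recovering the lemma with essentially no extra effort.
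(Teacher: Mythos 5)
Your proof is correct: the reduction to a finite family $\mathcal{F}=\mathcal{C}(A)$, the split into the trace family $\mathcal{F}_1$ and the ``doubled'' family $\mathcal{F}_2$ with $|\mathcal{F}|=|\mathcal{F}_1|+|\mathcal{F}_2|$, the bounds $\mathrm{VC}(\mathcal{F}_1)\leq d$ and $\mathrm{VC}(\mathcal{F}_2)\leq d-1$, and the Pascal-identity recursion are all sound, and the boundary cases (including $d=0$, where $\mathcal{F}_2$ must be empty) go through. The paper gives no proof of this lemma --- it is quoted as a classical result with citations to Sauer, Shelah, and Vapnik--Chervonenkis --- and your argument is the standard projection/induction proof found in those sources, so there is nothing to compare beyond noting that your write-up correctly supplies what the paper omits.
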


We now define maximum VC classes, which were investigated first by Welzl \cite{We87}, who called them complete range spaces. They are defined by the property that they always realize the bound imposed by Sauer's Lemma.  

\begin{definition}
 Suppose $\mathcal{C} \sse 2^X$ and VC$(\mathcal{C})=d$.  Say that $\mathcal{C}$ is \textit{maximum} of VC-dimension $d$ (or $d$-maximum) if for all finite $A \sse X$, $$|\mathcal{C}(A)| = \Phi_d(|A|)$$
\end{definition}

If sets are added to a VC class until no more can be added without increasing the VC dimension, the result is not necessarily maximum.  Therefore the following definition is useful.

\begin{definition}[\cite{Du99}]
 Suppose $\mathcal{C} \sse 2^X$ and VC$(\mathcal{C})=d$.  Say that $\mathcal{C}$ is \textit{maximal} of VC-dimension $d$ (or $d$-maximal) if for any $c \in 2^X \setminus \mathcal{C}$, VC$(\mathcal{C} \cup \{c\}) = d+1$.
\end{definition}

\begin{proposition}\label{P:P991}
If for a finite set $X$, $\gc \sse 2^X$ is maximum, then it is also maximal.
\end{proposition}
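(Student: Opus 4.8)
The plan is to pit the maximum hypothesis directly against Sauer's Lemma, using the entire ground set $X$ as the test set. The first thing I would record is what ``maximum'' buys us when $A = X$: since $X$ is finite and is the whole ground set, every $c \in \mathcal{C}$ satisfies $c \cap X = c$, so $\mathcal{C}(X) = \mathcal{C}$, and the defining equation of a $d$-maximum class gives $|\mathcal{C}| = |\mathcal{C}(X)| = \Phi_d(|X|)$. This exact count is the leverage for the whole argument. Throughout, fix $d = \text{VC}(\mathcal{C})$ and an arbitrary $c \in 2^X \setminus \mathcal{C}$, and set $\mathcal{C}' = \mathcal{C} \cup \{c\}$; the goal is to show $\text{VC}(\mathcal{C}') = d+1$.

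For the lower bound $\text{VC}(\mathcal{C}') \geq d+1$, the key observation is that $|\mathcal{C}'| = |\mathcal{C}| + 1 = \Phi_d(|X|) + 1$, because $c \notin \mathcal{C}$. Now suppose toward a contradiction that $\text{VC}(\mathcal{C}') = d$ (it cannot be smaller, as $\mathcal{C} \sse \mathcal{C}'$ already shatters a $d$-set). Then Sauer's Lemma applied to $\mathcal{C}'$ with $A = X$ yields $|\mathcal{C}'| = |\mathcal{C}'(X)| \leq \Phi_d(|X|)$, contradicting the count just obtained. Hence $\mathcal{C}'$ shatters some set of size $d+1$, i.e. $\text{VC}(\mathcal{C}') \geq d+1$.

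It remains to prove the matching upper bound $\text{VC}(\mathcal{C}') \leq d+1$, which is the statement that adjoining a single set can raise the VC dimension by at most one. If $\mathcal{C}'$ shattered some $B$ with $|B| = d+2$, then $|\mathcal{C}'(B)| = 2^{d+2}$; but $\mathcal{C}'(B) = \mathcal{C}(B) \cup \{c \cap B\}$, so by Sauer's Lemma $|\mathcal{C}'(B)| \leq \Phi_d(d+2) + 1$. Since $\Phi_d(d+2) = 2^{d+2} - (d+3)$, the right-hand side is $2^{d+2} - (d+2) < 2^{d+2}$, a contradiction. Combining the two bounds gives $\text{VC}(\mathcal{C}') = d+1$, which is exactly maximality, and since $c$ was arbitrary we are done. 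I do not expect a genuine obstacle here; the only points to handle with care are the identity $\mathcal{C}(X) = \mathcal{C}$ (which is what makes the maximum count usable) and the small binomial computation showing $\Phi_d(d+2) + 1 < 2^{d+2}$.
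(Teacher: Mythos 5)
Your proof is correct and follows the same route the paper intends: its proof of this proposition is simply the one-line remark ``This follows from Sauer's lemma,'' and your argument is a careful expansion of exactly that idea (the count $|\mathcal{C}| = \Phi_d(|X|)$ forces $\mathrm{VC}(\mathcal{C}\cup\{c\}) \geq d+1$ by Sauer, and the computation $\Phi_d(d+2)+1 < 2^{d+2}$ gives the matching upper bound). Both the exact identity $\mathcal{C}(X)=\mathcal{C}$ and the binomial estimate check out, so there is nothing to fix.
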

 \begin{proof}
  This follows from Sauer's lemma.
 \end{proof}

\begin{definition}
 Say that a partitioned formula $\phi(\xbar;\ybar)$ is $d$-\textit{maximum} (\textit{maximal}) in $\mcm$ if $\mathcal{C}_\phi(\mcm)$ is $d$-maximum (maximal).  
\end{definition}

While being maximum does not depend on the model used in the above definition, being maximal does.

  Let $\mathcal{C} \sse 2^X$ be $d$-maximum.  For any $A \sse X$ with $|A|=d+1$, $|\mathcal{C}(A)|=\Phi_d(d+1)=2^{d+1}-1$.  Let the unique $A^* \in 2^A \setminus \mathcal{C}(A)$ be called the \textit{forbidden label} for $\mathcal{C}$ on $A$ (Floyd's thesis \cite{Fl89}, Section 3.4). 

\begin{example}
 Let $X$ an infinite set, $d \in \omega$ and $\mathcal{C} = [X]^d$.  Then for any $A \sse X$ of cardinality $d+1$, the forbidden label for $\mathcal{C}$ on $A$ is $A$ itself.
\end{example}

\begin{example}\label{E:E33}
 Let $X = \bbd{Q}$ and $\mathcal{C} = \mathcal{C}_{x<y}(\bbd{Q})$.  Then for $\{a,b\} \sse \bbd{Q}$ with $a < b$, the forbidden label for $\mathcal{C}$ on $\{a,b\}$ is $\{b\}$.
\end{example}

  For a set $X$ and $n \in \omega$, we use the notation $[X]^n = \{A \sse X: |A|=n\}$ and $[X]^{\leq n} = \{A \sse X: |A|\leq n\}$.
\begin{definition} \label{D:bigD}
 Let $X$ be a set linearly ordered by $<$.  Let $\sigma:[X]^{d+1} \rightarrow 2^{d+1}$ assign a forbidden label to every subset of $X$ of size $d+1$ by associating every $A = \{a_0,\ldots,a_{d}\}$, $a_0 < a_1 < \cdots < a_{d}$ with the forbidden label $A_\sigma = \{a_i \in A : \sigma(A)(i) = 1\}$.  Say that $\gc \sse 2^X$ is \textit{characterized} by $\sigma$ if, for all $c \sse X$, $c \in \gc \iff \forall A \in [X]^{d+1}(c \cap A \neq A_\sigma)$.
\end{definition}

If $\sigma$ is constantly $\eta$ for some $\eta \in 2^{d+1}$ and $\gc$ is characterized by $\sigma$, we will abuse notation and say that $\gc$ is characterized by $\eta$. Sometimes we will refer to $\eta$ as a forbidden label, even though it is technically only a bit string. We do this because $\eta$ gives the form for all forbidden labels in $\gc$. 

We will usually be interested in finite sets, and so the following definition is convenient.
  
\begin{definition} 
For a linearly ordered set $(X,<)$, $\gc \sse 2^X$, $d \in \omega$, and $\eta \in 2^{d+1}$, say that $\gc$ is \textit{finitely characterized} by $\eta$ if for every finite $X_0 \sse X$, $\gc(X_0)$ is characterized by $\eta$.
\end{definition}

We would like to establish that if $\gc\sse 2^X$ is characterized by $\eta \in 2^{d+1}$, then $\gc$ is finitely characterized by $\eta$. Toward this end we give the following lemma.  We say that $c \in \gc$ \textit{traces} (or \textit{induces}) $\eta$ on $B \sse X$ if there are $b_0 <\ldots<b_{d}$ in $B$ such that $b_i \in c$ iff $\eta(i)=1$. 

\begin{lemma} \label{L:978}
 Let $(X,<)$ be a linearly ordered set, and choose any finite $B \sse X$, $d \in \omega$, and $\eta \in 2^{d+1}$.  Then for any $c \sse B$ not inducing $\eta$ on $B$ there can be found some $c' \sse X$ which does not induce $\eta$ on $X$ and such that $c' \cap B = c \cap B$.
\end{lemma}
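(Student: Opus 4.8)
The plan is to recast the statement as a problem about avoiding a pattern as a \emph{subsequence} of a word. Listing the elements of $X$ in increasing order, any $c' \subseteq X$ determines a $0/1$-word whose letter at position $x$ records whether $x \in c'$; by the definition of ``induces'', $c'$ induces $\eta$ on $X$ exactly when $\eta$ occurs as a (not necessarily contiguous) subsequence of this word, witnessed by an increasing choice of $d+1$ positions. Under this dictionary the hypothesis says that the word $u$ read off of $B$ under the colouring $c$ does not contain $\eta$ as a subsequence, and the goal is to colour the remaining points $X \setminus B$ so that the full word still omits $\eta$, without disturbing the letters on $B$. Since $c \subseteq B$, the requirement $c' \cap B = c \cap B$ is just that the extended colouring agrees with $c$ on $B$.

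First I would introduce the greedy left-to-right matcher for $\eta$: scanning a word one letter at a time, maintain a counter $s$ equal to the length of the longest prefix of $\eta$ matched so far, advancing from $s$ to $s+1$ precisely when the current letter equals $\eta(s)$ and leaving $s$ fixed otherwise. A standard fact is that $\eta$ is a subsequence of the word iff this counter ever reaches $d+1$; since the counter is non-decreasing, avoidance is equivalent to it staying $\le d$ throughout. Running the matcher on $u$ alone, let $s_0 = 0, s_1, \ldots, s_m$ be the successive counter values just after each of the $m$ elements of $B$ (with $s_0$ the value before $b_1$). The hypothesis that $u$ omits $\eta$ gives $s_m \le d$, and monotonicity then forces $s_i \le d$ for every $i$, so each $\eta(s_i)$ is defined.

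The construction is then to colour, for each gap of $X$ determined by $B$ (the points below $b_1$, the points strictly between consecutive elements of $B$, and the points above the top element), every point of the gap sitting just above $b_i$ with the single letter $1 - \eta(s_i)$, that is, the letter that does \emph{not} advance the matcher out of state $s_i$. The point of this choice is that feeding the matcher any number of copies of $1 - \eta(s_i)$ while it is in state $s_i$ leaves it in state $s_i$; hence the counter is frozen across each gap, so the values it attains at the elements of $B$ in the full interleaved word are exactly $s_0, \ldots, s_m$ as computed from $u$ alone. Consequently the counter never exceeds $d$ anywhere, the full word omits $\eta$, and the resulting $c'$ neither induces $\eta$ on $X$ nor alters the colouring on $B$.

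The main thing to get right, and the only place the argument could slip, is the interaction with possibly infinite gaps: I must check that the ``frozen state'' claim holds for a run of the non-advancing letter of arbitrary (even infinite) length, which is immediate from the transition rule since that letter never triggers an advance, so no cardinality assumption on the gaps is needed. The remaining bookkeeping, namely that the counter values at the $B$-elements in the interleaved word coincide with those obtained from $u$ and that every $s_i \le d$ so the prescribed gap colour is well-defined, is a short induction on position using monotonicity of the counter together with the hypothesis that $u$ avoids $\eta$.
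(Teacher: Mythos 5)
Your proof is correct, but it takes a genuinely different route from the paper's. The paper proceeds by induction on the length of $\eta$: writing $\eta=\mu^\frown\langle t\rangle$, it splits on whether $c$ already induces $\mu$ on $B$, and in the harder case picks a least witness $b_0<\cdots<b_d$ (least in the sense that $b_d$ is minimal), applies the inductive hypothesis to $B_{<b_d}$, and completes $c'$ with the constant value $1-t$ above $b_d$. You instead give a single direct construction: run the greedy subsequence matcher for $\eta$ along $B$, record its state $s_i$ after $b_i$, and colour each gap of $X\setminus B$ with the one letter $1-\eta(s_i)$ that cannot advance the matcher. This buys an explicit, non-recursive description of $c'$ (locally constant on the gaps) and dispenses with both the induction on $\eta$ and the least-witness step; notably, the paper needs $B$ finite (or $X$ well-ordered) precisely to extract that least witness, while your argument uses finiteness of $B$ only to enumerate the states. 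The one point to tighten is the one you flagged: on an arbitrary linear order one cannot literally feed an infinite, possibly non-well-ordered gap to the automaton letter by letter, so the frozen-state claim should be justified statically. For instance, define $s(x)$ as the length of the longest prefix of $\eta$ occurring as a subsequence of the word on $(-\infty,x]$; if some prefix of length $k>s_i$ were matched by positions $<b_{i+1}$, with $j$ of them lying in $(-\infty,b_i]$, then $j\le s_i\le k-1$ while every gap position contributes the letter $1-\eta(s_i)$, forcing $\eta(s_i)=1-\eta(s_i)$, a contradiction. (Equivalently, any putative witness $a_0<\cdots<a_d$ of $\eta$ in $c'$ involves only finitely many points, so one may work inside the finite set $B\cup\{a_0,\ldots,a_d\}$ where the sequential argument is literal.) With that justification supplied, the proof is complete.
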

\begin{proof}
 We prove the lemma by induction on $\eta$ as a binary string.  The base cases $\eta = \langle 0 \rangle$ and $\eta = \langle 1 \rangle$ are clear.  Now suppose for $s,t \in 2$, $\mu \in 2^{d+1}$ has ending digit $s$, and $\eta = \mu^\frown\langle t \rangle$.  

Let a finite $B \sse X$ be given.  Suppose $c \sse B$ does not induce $\eta$ on $B$. If $c$ does not induce $\mu$ on $B$, then by inductive hypothesis there exists $c' \sse X$ such that $c'$ does not induce $\mu$ on $X$.  A fortiori $c'$ fails to trace $\eta$ as well.  

Now suppose $c$ does induce $\mu$ on $B$. Let $b_0 <\cdots < b_d$ be a least witness in the sense that $b_d$ is lowest. Define $B_{<b_d} = \{b \in B: b< b_d\}$. Now $c \cap B_{<b_d}$ does not induce $\mu$ on $B_{<b_d}$, and so, by inductive hypothesis, there is $c' \sse X$ such that $c' \cap B_{<b_d} = c \cap B_{<b_d}$ and $c'$ does not induce $\mu$ on $X$.  Let $\chi_{c'}:X \rightarrow 2$ be the characteristic function of $c'$, with $\chi_{c'}(a)=1$ iff  $a \in c$. Define $\chi_{c}$ similarly.  Then $\chi_{c}(b_d)=s$, and $\chi_{c}$ is constantly $1-t$ on $B_{> b_d}$, for otherwise $c$ would induce $\eta$.  Define
\begin{equation}
\chi^*(x) = \begin{cases}
  \chi_{c'}(x) &\text{ if } x < b_d \\ s &\text{ if } x = b_d \\ 1-t &\text{ if } x > b_d
\end{cases}
\end{equation}
Then $\chi^*$ is a total function on $X$ which agrees with $\chi_{c}$ on $B$.  Let $c^*$ be the set associated to the characteristic function $\chi^*$.

We must show that $c^*$ does not induce $\eta$ on $X$.  By way of contradiction, suppose there are $a_0<\ldots<a_{d+1}$ such that $a_i \in c^*$ iff $\eta(i)=1$.  By choice of $c'$, $a_d \geq b_d$. Therefore $a_{d+1} > b_d$.  Then $\chi^*(a_{d+1})=1-t$ by definition of $\chi^*$, and $\chi^*(a_{d+1})=t$ by definition of $\eta$.  This is a contradiction.  

\end{proof}
Note that in Lemma \ref{L:978}, the assumption that $B$ is finite can be removed if $(X,<)$ is a well-ordering, since that assumption is only used to get a least witness.

\begin{corollary}\label{T:T55}
 For any linearly ordered set $(X,<)$, $\eta \in 2^{d+1}$, and $\gc \sse 2^X$, if $\gc$ is characterized by $\eta$, then $\gc$ is finitely characterized by $\eta$.
\end{corollary}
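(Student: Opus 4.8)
The plan is to unwind the definition of \emph{finitely characterized} and reduce each direction to either a trivial restriction argument or a direct invocation of Lemma \ref{L:978}. First recall that, since $\gc$ is characterized by $\eta$, membership $c \in \gc$ is equivalent to $c$ \emph{not} inducing $\eta$ on all of $X$ (this is just the contrapositive of Definition \ref{D:bigD} read through the ``induces'' terminology, with the forbidden label $A_\eta$). So fixing a finite $X_0 \sse X$, what I must show is that $\gc(X_0)$ is characterized by $\eta$; that is, for every $c' \sse X_0$, that $c' \in \gc(X_0)$ iff $c'$ does not induce $\eta$ on $X_0$, where $\gc(X_0) = \{c \cap X_0 : c \in \gc\}$.

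For the forward direction, suppose $c' \in \gc(X_0)$, say $c' = c \cap X_0$ with $c \in \gc$. If $c'$ induced $\eta$ on $X_0$ via witnesses $b_0 < \cdots < b_d$ in $X_0$, then since each $b_i \in X_0$ we have $b_i \in c \iff b_i \in c'$, so the very same witnesses show $c$ induces $\eta$ on $X$, contradicting $c \in \gc$. Hence $c'$ does not induce $\eta$ on $X_0$. This direction requires no extension argument at all; it is merely the observation that a trace of $\eta$ survives restriction to a smaller set.

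The backward direction is where Lemma \ref{L:978} does all the work. Suppose $c' \sse X_0$ does not induce $\eta$ on $X_0$. Applying the lemma with the finite set $B = X_0$ produces some $c'' \sse X$ that does not induce $\eta$ on $X$ and satisfies $c'' \cap X_0 = c' \cap X_0 = c'$. Since $c''$ avoids $\eta$ globally, the characterization hypothesis gives $c'' \in \gc$, and then $c' = c'' \cap X_0 \in \gc(X_0)$, as desired.

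The only substantive content lies in the backward direction, and this is exactly the statement Lemma \ref{L:978} was engineered to supply: the genuine combinatorial difficulty, namely extending a locally $\eta$-free labeling to a globally $\eta$-free one, has already been discharged there, so here it is nothing more than a single application. Consequently the corollary is essentially immediate once the two implications are separated. The one place to be slightly careful is the degenerate case $|X_0| \le d$, where $[X_0]^{d+1} = \emptyset$ so that no subset of $X_0$ induces $\eta$; there the forward direction is vacuous and the backward direction still goes through, since Lemma \ref{L:978} imposes no lower bound on $|B|$.
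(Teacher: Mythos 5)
Your proof is correct and follows essentially the same route as the paper's: the forward direction is the observation that a trace of $\eta$ survives restriction, and the backward direction is a single application of Lemma \ref{L:978} to extend an $\eta$-avoiding subset of $X_0$ to an $\eta$-avoiding subset of $X$. The only difference is that you spell out both implications and the degenerate case more explicitly than the paper does.
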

\begin{proof}
 Let $B \sse X$ be a finite subset.  Clearly nothing in $\gc(B)$ traces $\eta$ on $B$. Suppose $c\sse B$ does not trace $\eta$.  By Lemma \ref{L:978} there is $c' \sse X$ which does not trace $\eta$ on $X$ and such that $c' \cap B = c$.  Then by hypothesis $c' \in \gc$, and therefore $c \in \gc(B)$.
\end{proof}

When $(X,<)$ is well-ordered the above corollary can be strengthened to say that if $\gc$ is characterized by $\eta$, then $\gc(X_0)$ is characterized by $\eta$ for any $X_0 \sse X$.

The following definitions will be needed in the next section.

\begin{definition}
If $(I,<)$ is a linear order and $\seq[I]{\abar}$ is a sequence of points in $M^n$, we say the sequence is \textit{indiscernible}\textit{} if for every formula $\phi(\vbar_1,\ldots,\vbar_n)$ and subsequences $i_1 < \cdots < i_n$ and $j_1 < \cdots < j_n$ of $I$, $\mcm \models \phi(\abar_{i_1},\ldots,\abar_{i_n}) \equiv \phi(\abar_{j_1},\ldots,\abar_{j_n})$.
\end{definition}

We will classify maximum VC classes on indiscernible sequences modulo the following equivalence relation, which we call \textit{similarity}.  

\begin{definition}
 If $\gc_1 \sse 2^X$ and $\gc_2 \sse 2^X$, say $\gc_1 \sim \gc_2$ if for every finite $A \sse X$, $\gc_1(A) = \gc_2(A)$.  
\end{definition}

Note that if $\gc_1 \sim \gc_2$ and $\gc_1$ is finitely characterized by some $\eta \in 2^{d+1}$, then $\gc_2$ is also finitely characterized by $\eta$. Also, any $\gc_1$ and $\gc_2$ finitely characterized by the same $\eta \in 2^{d+1}$ will have $\gc_1 \sim \gc_2$.

Say that the theory $T$ \textit{interprets} $\gc \sse 2^X$ in $n$ parameters if there is a $L$-formula $\phi(\xbar;\ybar)$, $\ybar = \langle y_1,\ldots,y_n\rangle$, and an injection $f:X \rightarrow M^{|\xbar|}$ such that for all $c \in \gc$ there is $\bbar_c \in M^n$ such that $$f(c) = \phi(f(X);\bbar_c) = \{\abar \in f(X): \mcm \models \phi(\abar;\bbar_c)\}$$

Note that there could exist extraneous $\bbar$ so that $\phi(f(X);\bbar) \notin \{f(c):c \in \gc\}$.

\section{Alternation conditions and forbidden labels}

Set systems $\gc \sse 2^X$ which are characterized by some $\eta \in 2^{d+1}$ for $d \in \omega$ will play a central role in the results below, and therefore we will say a few words about how these can be understood. We offer no proofs in this section, though the claims can be easily derived by considering the proof of Lemma \ref{C:C1} (see the remark following that lemma).

  When $\gc \sse 2^X$ is characterized by $\eta$, with the implicit ordering on $X$, every set in $\gc$ is given a geometric form by $\eta$ in a way that is similar to, in fact stronger than, the restrictions given by alternation number. Adler \cite{Ad10} includes a discussion of alternation number, which is usually defined on an indiscernible sequence. 

\begin{definition}
For a linear order $(X,<)$ and $A\sse X$, the\textit{ alternation number} of $A$ in $X$ is $n \in \omega$ if there are $a_1<\cdots<a_n \in X$ such that $a_i \in A$ iff $a_{i+1} \notin A$ for all $i=1,\ldots,n$, and there are not $n+1$ such elements in $X$. 
\end{definition}

The alternation number of a family $\gc$ is naturally defined as the supremum of the alternation numbers of its member sets.  Note that any $\gc$ finitely characterized by a forbidden label has a finite alternation number. In particular, any $c \in \gc$ with alternation number $2(d+1)$ induces every $\eta \in 2^{d+1}$.

If $\gc$ is characterized by $\eta$, then in some sense $\eta$ contains all of the information (modulo completeness properties of the order) about how the members of $\gc$ alternate. In particular it determines the alternation number of $\gc$. The converse fails, however; the alternation number is less restrictive, although more robust.

 For instance, the set systems $\{(a,b): a<b \in \QQ\}$ and $\{\{a\}:a \in \QQ\}$ in $\QQ$ both have alternation number 3 with respect to the usual ordering on $\QQ$. But they are clearly quite different.  This difference is reflected in the different $\eta$ which characterize them; these are, respectively, $\langle 1,0,1 \rangle$ and $\langle 1,1 \rangle$.

Table \ref{T:og} expresses a few $\gc \sse 2^X$ and the associated $\eta$ which finitely characterize them. We assume $(X,<)$ is an infinite dense linear order. In each row the forbidden label on the left finitely characterizes the set system on the right. 

\begin{table} 
\begin{center}
\begin{tabular}{|l|l|}
\hline
$\eta$ & $\gc$ \\ \hline
$\langle 0 \rangle$ & $\{X\}$\\
$\langle 1 \rangle$ & $\{\emptyset\}$\\
$\langle 0,0 \rangle$ & $\{X\setminus{a}:a\in X\}$×\\
$\langle 0,1 \rangle$ & $\{(-\infty,a): x \in X\}$\\
$\langle 1,0 \rangle$ & $\{(a,\infty):a \in X\}$\\
$\langle 1,1 \rangle$ & $\{\{a\}:a\in X\}$\\
$\langle 0,0,0 \rangle$ & $\{X \setminus \{a,b\}:a<b \in X\}$\\
$\langle 0,0,1 \rangle$ & $\{ (-\infty,b)\setminus\{a\}:a<b \in X\}$\\
$\langle 0,1,0 \rangle$ & $\{(-\infty,a) \cup (b,\infty): a<b \in X\}$\\
$\langle 1,0,1 \rangle$ & $\{(a,b): a< b \in X\}$\\
$\langle 1,0,1,0 \rangle$ & $\{(a,b) \cup (c,\infty): a< b<c \in X\}$\\
$\langle 1,1,1,0,0,1 \rangle$ & $\{\{a,b\} \cup ((c,e)\setminus \{d\}) : a<b<c<d<e \in X\}$\\ \hline
\end{tabular}
\end{center}
\caption{Some set systems $\gc \sse 2^X$, for $(X,<)$ a linear order, and the forbidden labels $\eta$ that finitely characterize them.}\label{T:og}
 \end{table}

The key for generating Table \ref{T:og} is given in Table \ref{T:key}.  We can view Table \ref{T:key} as a procedure for translating a bit-string into an order-theoretic expression.  Table \ref{T:ex1} illustrates the translation procedure, and Table \ref{T:ex2} shows a reverse translation.

\begin{table}
\begin{center}
\begin{tabular}{|l|l|} \hline
code & translation\\  \hline
$\langle 1 \ldots \rangle$ & do nothing\\
$\langle 0 \ldots \rangle$ & $(-\infty,\ldots $\\
$\langle \ldots 0,0 \ldots \rangle$ & remove point\\
$\langle \ldots 0,1 \ldots \rangle$ & end interval\\
$\langle \ldots 1,0 \ldots \rangle$ & begin interval\\
$\langle \ldots 1,1 \ldots \rangle$ & add point\\
$\langle \ldots 0 \rangle$ & $\ldots \infty)$\\
$\langle \ldots 1 \rangle$ & do nothing\\ \hline
\end{tabular}
\end{center}
\caption{A key for directly translating forbidden labels to set-theoretic expressions.}\label{T:key}
\end{table}

\begin{table} 
\begin{center}
\begin{tabular}{|l|} \hline 
1,1,0,0,1,0,1,0\\ \hline
${}_\emptyset1_{a} 1_{(b}0_{\setminus c}0_{d)}1_{(e}0_{f)}1_{(g}0_{\infty)}$\\ \hline
$\{a\} \cup (b,d)\setminus\{c\} \cup (e,f) \cup (g,\infty)$ \\ \hline
\end{tabular}
\end{center}
\caption{A binary string translated to a set system finitely characterized by it. Spaces between digits can be seen as regions of alternation. We assume $a<b<c<d<e<f<g$.}\label{T:ex1}
 \end{table}

\begin{table}
\begin{center}
\begin{tabular}{|l|} \hline
$(-\infty,b)\setminus\{a\} \cup \{c,d\} \cup (e,f)$ \\ \hline
${}_{(-\infty}0_{\setminus a}0_{b)}1_{c}1_{d}1_{(e}0_{f)}1$\\ \hline
0,0,1,1,1,0,1 \\ \hline
\end{tabular}
\end{center}
\caption{A set system translated to its finitely characterizing forbidden label. We assume $a<b<c<d<e<f$.}\label{T:ex2}
 \end{table}

As can be seen from considering the tables, a forbidden label gives something like the form of a member of $\gc$. Conversely, for any given form of a point-interval system (where the order in which points and intervals occur is held constant) there is an associated forbidden label.   
\section{Order formulas}\label{S:333}

 In this section we show that any maximum family on a sequence of indiscernibles is similar to a family defined on the sequence by a quantifier free formula in the language $L=\{<\}$.

For simplicity we will restrict our attention to dense linear orders without endpoints (DLO), and in particular the structure $(\QQ,<)$. We will make occasional use of the well-known fact that any dense linear order is an $L=\{<\}$ indiscernible sequence.

For any q.f. order formula $\phi(x;y_1,\ldots,y_n)$, define $cof(\phi)$ to be 1 if for some (equivalently any) strictly increasing sequence $a_1<\cdots<a_n<a_{n+1}$ in $\QQ$, $$\QQ \models \phi(a_{n+1};a_1,a_2,\ldots,a_n)$$ and 0 otherwise.

Define $\gc^o_\phi = \{\phi(\QQ;a_1,\ldots,a_n): a_i \in \QQ, a_1 < \cdots < a_n\}$.  This is a subfamily of $\gc_\phi$, corresponding to the sets definable by $\phi$ with parameters in increasing order.

Let $\Sigma$ denote the collection of quantifier free $L=\{<\}$ formulas in at least the variable $x$, partitioned so that $x$ is the only left-hand (non-parameter) variable.

\begin{lemma} \label{C:C1}
For any $d \in \omega$ and $\eta \in 2^{d+1}$, there exists some formula $\psi(x;y_1,\ldots,y_d) \in \Sigma$ such that $\gc^o_\psi$ is finitely characterized by $\eta$.
\end{lemma}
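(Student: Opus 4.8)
The plan is to build $\psi$ by recursion on the length of $\eta$ and to verify the finite characterization by a matching induction, in the spirit of the proof of Lemma~\ref{L:978}. For the base case $\eta\in 2^1$ take $\psi:=(x=x)$ if $\eta=\langle 0\rangle$ and $\psi:=(x\ne x)$ if $\eta=\langle 1\rangle$, so that $\gc^o_\psi(X_0)$ is $\{X_0\}$ resp.\ $\{\emptyset\}$, exactly as characterization by $\langle 0\rangle$ resp.\ $\langle 1\rangle$ demands. For the recursive step write $\eta=\mu^\frown\langle t\rangle$ with $\mu\in 2^{d}$ of last bit $s$, let $\psi_\mu(x;y_1,\dots,y_{d-1})$ be furnished by the induction hypothesis for $\mu$, and set
\[
\psi_\eta(x;y_1,\dots,y_d)=
\begin{cases}
\psi_\mu\wedge(x\ne y_d)&\text{if }(s,t)=(0,0),\\
\psi_\mu\wedge(x< y_d)&\text{if }(s,t)=(0,1),\\
\psi_\mu\vee(x> y_d)&\text{if }(s,t)=(1,0),\\
\psi_\mu\vee(x= y_d)&\text{if }(s,t)=(1,1).
\end{cases}
\]
In an increasing tuple the new parameter $y_d$ is largest, and the four clauses realize exactly the four top-of-configuration operations (remove point, end interval, begin interval, add point) of Table~\ref{T:key}; running this recursion against Table~\ref{T:og} is a quick check. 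Note $\psi_\eta\in\Sigma$, since only atoms $x<y_i$ and $x=y_i$ are used.

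The claim to verify is that for every finite $X_0\sse\QQ$ the family $\gc^o_{\psi_\eta}(X_0)$ is characterized by $\eta$; this is two inclusions, (a) no member of $\gc^o_{\psi_\eta}(X_0)$ induces $\eta$, and (b) every $c\sse X_0$ not inducing $\eta$ belongs to $\gc^o_{\psi_\eta}(X_0)$. The bookkeeping rests on one fact, itself immediate from the recursion: for increasing parameters $\psi_\mu(\QQ;\bar a)$ is constantly $1-s$ above its top parameter (that is, $cof(\psi_\mu)=1-s$), so its top region $(a_{d-1},\infty)$ lies inside the set when $s=0$ and is disjoint from it when $s=1$. This is precisely the data carried by the last bit of $\mu$ and is what makes the pairing $(s,t)$ work.

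Direction (a) I would prove by contradiction: an $\eta$-trace $b_0<\dots<b_d$ inside $c=\psi_\eta(\QQ;\bar a)\cap X_0$ restricts through the top clause to a $\mu$-trace $b_0<\dots<b_{d-1}$ inside $\psi_\mu(\QQ;\bar a)\cap X_0\in\gc^o_{\psi_\mu}(X_0)$, contradicting the induction hypothesis. The only delicate point is a witness coinciding with $a_d$, which can happen only in the point clauses $(0,0)$ and $(1,1)$; there the constant top value $1-s$ of $\psi_\mu$ forces every witness lying above $a_d$ --- which in the offending case includes $b_d$ --- to the value opposite $\eta_d=t$, the contradiction. Moreover $\psi_{\bar\eta}=\neg\psi_\eta$ for the bitwise complement $\bar\eta$, and complementation within $X_0$ converts finite characterization by $\eta$ into finite characterization by $\bar\eta$; so it suffices to treat $s=0$, i.e.\ the two $\wedge$-clauses.

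Direction (b) is the heart and, I expect, the main obstacle. Assume $c\sse X_0$ does not induce $\eta=\mu^\frown\langle t\rangle$ with $s=0$. In the end-interval case $(0,1)$ I would place $a_d$ just above $\max(c)$, with no point of $X_0$ in $(\max(c),a_d)$, put $X_0'=X_0\cap(-\infty,a_d)$, observe that $c$ cannot induce $\mu$ on $X_0'$ (else appending $\max(c)$, which carries value $\eta_d=1$, would produce an $\eta$-trace), and invoke the induction hypothesis on $X_0'$. In the remove-point case $(0,0)$ I would let $p$ be the least possible top of a $\mu$-trace in $c$ --- so $c$ is constantly $1$ strictly above $p$ while $\chi_c(p)=0$ --- set $c'=c\cup\{p\}$, check that $c'$ avoids $\mu$, realize $c'$ by the induction hypothesis, and take $a_d=p$ to punch the hole back out; if $c$ carries no $\mu$-trace at all, one places $a_d$ above $X_0$ so that nothing is removed. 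The recurrent difficulty in (b) is order-bookkeeping: the tuple supplied by the induction hypothesis must be made strictly increasing with $a_{d-1}<a_d$ while preserving the relevant trace, which I would arrange using the density and homogeneity of $(\QQ,<)$ to slide the high parameters down into the constant-$1$ top region and into the gaps of $X_0$ without altering any trace. Carrying out this relocation cleanly in the $(0,0)$ case, where $a_d=p$ sits in the interior of $X_0$, is the step I expect to require the most care. The concluding remark of Section~\ref{S:333}, that $\eta$ reads off the order-theoretic form of the members of $\gc$, then falls directly out of this construction.
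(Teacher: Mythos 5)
Your proposal is correct and follows essentially the same route as the paper: the same base formulas $x=x$ and $x\neq x$, the same four inductive clauses keyed to the pair $(s,t)$ (which are exactly the paper's $\psi^0_0,\psi^0_1,\psi^1_0,\psi^1_1$), the same auxiliary invariant $cof(\psi_\mu)=1-s$, and the same verification ideas (extremal witness, then sliding parameters via the homogeneity of $(\QQ,<)$ to make the tuple increasing). The only departures are cosmetic --- your complementation shortcut for the $\vee$-clauses (note $\neg\psi_\eta$ and $\psi_{\bar\eta}$ can differ at the single point $a_d$, though this does not affect the finite characterization) and the unhandled $c=\emptyset$ edge case --- neither of which changes the argument.
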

\begin{proof}
  We show this by induction on binary strings.  For the base case, observe that $\langle0\rangle$ finitely characterizes $x=x$ and $\langle 1 \rangle$ finitely characterizes $x \neq x$. We will carry the additional inductive hypothesis that $cof(\phi)=0$ iff $\eta = \mu^\frown\langle 1 \rangle$ for some $\mu$.

For the induction step, suppose for $\phi(x;y_1,\ldots,y_{d}) \in \Sigma$ and $\eta \in 2^{d+1}$, we have $\gc_\phi^o$ finitely characterized by $\eta = \mu^\frown \langle s \rangle$ for $s \in 2$.  

 We must find $\psi_0^s, \psi_1^s \in \Sigma$ such that $\eta {}^\frown \langle i \rangle $ finitely characterizes $\gc_{\psi_i^s}^o$ for $i=0,1$.

We divide the argument into cases based on $cof(\phi)$.  First suppose that $cof(\phi) = 0$, and consequently $s=1$, by inductive hypothesis. Define $$\psi_0^1(x;y_1,\ldots,y_{d+1}) =  \phi(x;y_1,\ldots,y_{d}) \vee x>y_{d+1}$$
\begin{claim}
 $\gc_{\psi_0^1}^o$ is finitely characterized by $\eta {}^\frown \langle 0 \rangle $.
\end{claim}
Let $A \sse \QQ$ be finite, and $C \sse A$.  Suppose there are not $B = b_1<\ldots<b_{d+1}<b_{d+2}$ in $A$ such that $C$ traces $\eta {}^\frown \langle 0 \rangle $ on $B$.  We must show $C \in \gc_{\psi_0^1}^o(A)$. Consider these cases.
\begin{enumerate}
 \item There are $B = b_1<\ldots<b_{d+1}$ in $A$ such that $C$ traces $\eta $ on $B$. 
 \item There are no such $B$.
\end{enumerate}
Suppose Case 2 holds. By inductive hypothesis $C \in \gc_\phi^o(A)$.  Then picking the $y_{d+1}$ parameter sufficiently large, $C \in \gc_{\psi_0^1}^o(A)$.

Suppose Case 1 holds. Let $B = b_1<\ldots<b_{d+1}$ be a least witness, in the sense that $b_{d+1}$ is minimal. Therefore if $A_{<b_{d+1}}:=\{a \in A: a < b_{d+1}\}$ and $C' = C \cap A_{<b_{d+1}}$ then by inductive hypothesis $C' \in \gc_\phi^o(A_{<b_{d+1}})$.  Let this be witnessed by parameters $a_1<\cdots<a_d$.  By indiscernibility, we may assume $a_d < b_{d+1}$.  Since $cof(\phi)=0$, we have $\eta= \mu {}^\frown \langle 1 \rangle $ for some $\mu \in 2^d$.  Then, by the hypothesis on $C$, $A_{\geq b_{d+1}} \sse C$.  Now, picking $a_{d+1}$ between $b_d$ and $b_{d+1}$, we have a parameter set $a_1<\cdots<a_d <a_{d+1}$ putting $C \in \gc_{\psi_0^1}^o(A)$.  

Consider the converse, that if $C \in \gc_{\psi_0^1}^o(A)$, then there are not $B = b_1<\ldots<b_{d+1}<b_{d+2}$ in $A$ such that $C$ traces $\eta {}^\frown \langle 0 \rangle $ on $B$. Suppose, by way of contradiction, that this situation holds.  Let the parameters $a_1<\cdots<a_{d+1}$ witness $C \in \gc_{\psi_0^1}^o(A)$, where $C$ traces $\eta {}^\frown \langle 0 \rangle $ on $B$. Considering the form of $\psi_0^1$, we must have $a_{d+1}>b_{d+2}$, because $b_{d+2} \notin C$. But then $\phi(x;a_1,\ldots,a_d)$ induces $\eta$ on $b_1<\cdots<b_{d+1}$. This gives a contradiction, completing the claim.

Define
$$\psi_1^1(x;y_1,\ldots,y_{d+1}) =  \phi(x;y_1,\ldots,y_{d}) \vee x=y_{d+1}$$
\begin{claim}
 $\gc_{\psi_1^1}^o$ is finitely characterized by $\eta {}^\frown \langle 1 \rangle $.
\end{claim}
The proof of this claim, and the cases for $cof(\phi)=1$, are similar to the above.  Here are the remaining forms, with the proof left to the reader:

$$\psi_0^0(x;y_1,\ldots,y_{d+1}) =  \phi(x;y_1,\ldots,y_{d}) \wedge x\neq y_{d+1}$$
and
$$\psi_1^0(x;y_1,\ldots,y_{d+1}) =  \phi(x;y_1,\ldots,y_{d}) \wedge x<y_{d+1}$$
\end{proof}

Note that the forms of the formulas $\psi_t^s$, for $s,t \in 2$, given in Lemma \ref{C:C1} justify the entries in Table \ref{T:key}.

\begin{definition}
For a given $\eta \in 2^{d+1}$
 $$\Sigma(\eta) := \{\phi(x;y_1,\ldots,y_n) \in \Sigma:\gc^o_\phi(\QQ) \text{ is finitely characterized by } \eta\}$$
\end{definition}

\begin{proposition}\label{P:Pevery}
 For every q.f. order formula $\phi(x;\ybar)$, $\gc_\phi^o$ is finitely characterized by some forbidden label.  In other words, $\{\Sigma(\eta): \eta \in 2^{d+1}, d \in \omega\}$ is a partition of $\Sigma$.
\end{proposition}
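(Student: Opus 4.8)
The statement has two halves: first, that \emph{every} $\phi \in \Sigma$ lies in some $\Sigma(\eta)$ (covering), and second, that no $\phi$ lies in two distinct $\Sigma(\eta)$ (disjointness); together these give the partition. My plan is to treat covering via the geometric structure of quantifier-free order-definable sets, and disjointness via a short VC-dimension computation. For covering, the key structural observation is that each member of $\gc^o_\phi$ is a finite union of points and (possibly unbounded) open intervals of one fixed combinatorial shape. By quantifier elimination in DLO, $\phi(x;y_1,\dots,y_n)$ is a Boolean combination of atomic formulas; once the parameters are specialized to an increasing tuple $a_1 < \cdots < a_n$, every atomic formula not involving $x$ acquires a fixed truth value, so $\phi(x;a_1,\dots,a_n)$ is equivalent to a Boolean combination of the formulas $x < a_i$ and $x = a_i$. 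Hence whether $\mcm \models \phi(b;\abar)$ depends only on which cell of the partition $(-\infty,a_1),\{a_1\},(a_1,a_2),\dots,\{a_n\},(a_n,\infty)$ contains $b$, and, because $(\QQ,<)$ is an $\{<\}$-indiscernible sequence, the set of selected cells is the same for every increasing $n$-tuple. Thus $\gc^o_\phi$ is exactly the family of point-interval sets of one fixed shape.

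I would then read off a forbidden label $\eta$ from this shape by the reverse of the translation in Table \ref{T:key} (as illustrated in Table \ref{T:ex2}) and verify that $\gc^o_\phi$ is finitely characterized by $\eta$. The verification is an induction on the shape mirroring the proof of Lemma \ref{C:C1}: appending one more cell to the shape corresponds to exactly one of the four transitions (remove point, end interval, begin interval, add point) realized there by the formulas $\psi^s_t$, and the inductive bookkeeping on $cof$ controls the final bit. Equivalently, since Lemma \ref{C:C1} produces, for each $\eta$, a formula whose ordered family is finitely characterized by $\eta$, and since two ordered families of the same shape are similar and hence share their forbidden label, it suffices to check that the construction of Lemma \ref{C:C1} realizes every shape; this is precisely what the four forms $\psi^s_t$ accomplish.

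For disjointness, suppose $\phi$ were finitely characterized by both $\eta \in 2^{d+1}$ and $\eta' \in 2^{d'+1}$. Fix a finite $A \sse \QQ$ with $|A| = d+1$. On such an $A$ the only increasing sequence of length $d+1$ is $A$ itself in order, so exactly one subset of $A$ traces $\eta$, giving $|\gc^o_\phi(A)| = 2^{d+1}-1 = \Phi_d(d+1)$. If $d' > d$, then no subset of $A$ can trace the longer label $\eta'$, forcing $\gc^o_\phi(A) = 2^A$ and hence $2^{d+1} = 2^{d+1}-1$, a contradiction; so $d = d'$. With $d = d'$, the characterizations by $\eta$ and $\eta'$ exclude the same single subset $A_\eta = A_{\eta'}$ of $A$, and reading this excluded subset against the increasing enumeration of $A$ forces $\eta = \eta'$.

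The main obstacle is the covering step, specifically making the passage from the geometric shape to the bit string $\eta$ fully rigorous. One must verify not only that every set of the given shape fails to trace $\eta$ (the easy direction) but also the converse, that every $c \sse A$ failing to trace $\eta$ on a finite $A$ is actually realized by the shape with parameters in increasing order; this is where endpoint behaviour and the $cof$ bookkeeping of Lemma \ref{C:C1} must be handled carefully. Once the shape-to-label correspondence is established, the partition claim follows immediately from covering together with the disjointness computation above.
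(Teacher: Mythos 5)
Your proof is correct in outline but takes a genuinely different route from the paper's. The paper proves covering by a syntactic induction on formulas: it establishes that negation flips the forbidden label to its bitwise complement $\bar{\eta}$, handles the three disjunctions $\phi\vee x>y$, $\phi\vee x<y$, $\phi\vee x=y$ by reusing the case analysis of Lemma \ref{C:C1}, and defers everything else to ``logical manipulations.'' You instead argue semantically: specializing the parameters to an increasing tuple reduces any quantifier-free order formula to a union of cells of the decomposition $(-\infty,a_1),\{a_1\},\dots,(a_n,\infty)$, indiscernibility of $(\QQ,<)$ fixes the selected cells, and the resulting family coincides (up to similarity) with one produced by the construction of Lemma \ref{C:C1}, hence inherits its finite characterization. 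Your version makes explicit exactly the normal-form step the paper waves through, and your disjointness computation (counting that a label of length $d+1$ forbids exactly one subset of a $(d+1)$-element set, so $|\gc^o_\phi(A)|=2^{d+1}-1$ forces both the length and the value of the label) supplies the pairwise-disjointness half of the ``partition'' claim, which the paper's proof does not address at all. The one place you should be more careful than you are is the ``same shape implies similar'' step: adjacent selected cells must first be merged into maximal convex blocks before reading off $\eta$, since e.g.\ the family of sets $\{a\}\cup(a,b)$ is similar to that of the open intervals $(a,b)$ (label $\langle 1,0,1\rangle$) and not to the family of sets $\{a\}\cup(a',b)$ with $a<a'$ (label $\langle 1,1,0,1\rangle$); you flag this under ``endpoint behaviour,'' and it is a fixable bookkeeping issue rather than a gap in the argument.
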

\begin{proof}
 The proof is by induction on formulas.  It is easy to see the claim holds for $x=x$ and $x \neq x$.  Now fix some formula $\phi(x;\ybar)$ such that $\gc_\phi^o$ is finitely characterized by $\eta$, where $\eta = \mu^\frown \langle s \rangle$ for some $\mu\in 2^d$ and $s \in 2$. As in Lemma \ref{C:C1}, we carry the inductive hypothesis that $s=1-cof(\phi)$. Let $\bar{\eta}\in 2^{d+1}$ be defined by $\bar{\eta}(i) = 1-\eta(i)$ for all $i < d+1$. 

\begin{claim}
 $\gc_{\neg\phi}^o$ is finitely characterized by $\bar{\eta}$.
\end{claim}
Let $A \sse \QQ$.  Then $c \in \gc_{\neg\phi}^o(A)$ iff  $\QQ \setminus c \in \gc_{\phi}^o(A)$ iff $\QQ \setminus c$ does not induce $\eta$ on $A$ iff $c$ does not induce $\bar{\eta}$ on $A$.  This proves the claim.  

Consider these cases for the remainder of the induction.  All other cases follow from logical manipulations and the claim.  
\begin{enumerate}
 \item $\psi_1(x;\ybar,y) = \phi(x;\ybar) \vee x>y$
\item $\psi_2(x;\ybar,y) = \phi(x;\ybar)  \vee  x<y$
\item $\psi_3(x;\ybar,y) = \phi(x;\ybar)  \vee  x=y$
\end{enumerate}

Consider $\psi_1(x;\ybar,y)$. If $s=0$ then $cof(\phi)=1$ and $\gc_{\psi_1}^o = \gc_\phi^o$.  If $s=1$ then by the arguments from Lemma \ref{C:C1},  $\gc_{\psi_1}^o$ is finitely characterized by $\eta^\frown \langle 0 \rangle$.  
Consider $\psi_2(x;\ybar,y)$. If $s=0$ then $cof(\phi)=1$ and $\gc_{\psi_2}^o = \{\QQ\}$, and $\gc_{\psi_1}^o$ is finitely characterized by $\langle 0 \rangle$. If $s=1$ and $cof(\phi) = 0$ then $\gc_{\psi_2}^o = \gc_{x<y}^o$, and $\gc_{\psi_2}^o$ is finitely characterized by $\langle 0,1 \rangle$. Consider $\psi_3(x;\ybar,y)$. If $s=0$ then $cof(\phi)=1$ and $\gc_{\psi_3}^o =\gc_\phi^o$.  If $s=1$ then by the arguments from Lemma \ref{C:C1}, $\gc_{\psi_3}^o$ is finitely characterized by $\eta^\frown \langle 1 \rangle$.

\end{proof}

We have characterized, up to similarity, the form a maximum formula can take on an indiscernible sequence.  We make this precise in the following corollary.

\begin{corollary} \label{C:C2}
 Suppose $A = \langle \abar_i \rangle_{i \in I}$ is any linearly ordered sequence compatible with $\xbar$ and for some $B \sse M^{|\ybar|}$, and formula $\phi(\xbar;\ybar)$ we have that $\gc_\phi(A)^B$ is $d$-maximum and finitely characterized by the forbidden label $\eta$.

 Let $L'$ consist of a single $2|\xbar|$-ary relation $\prec$ and interpret $(\abar_i \prec \abar_j)^\mcm$ iff $i < j$.  Then there is a quantifier free $L'$ formula $\theta(\xbar;\ybar_1,\ldots,\ybar_d)$ with $|\ybar_i|=|\xbar|$ for $i=1,\ldots,d$ and a set $B'$, such that $\gc_\phi(A)^B \sim \gc_\theta^o(A)^{B'}$.
\end{corollary}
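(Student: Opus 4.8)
The plan is to reduce the statement to a finite-characterization property and then invoke the similarity remark following the definition of $\sim$. Since $\gc_\phi(A)^B$ is finitely characterized by $\eta \in 2^{d+1}$, it suffices to produce a quantifier free $L'$ formula $\theta(\xbar;\ybar_1,\ldots,\ybar_d)$ and a parameter set $B'$ for which $\gc_\theta^o(A)^{B'}$ is finitely characterized by the \emph{same} $\eta$; the relation $\gc_\phi(A)^B \sim \gc_\theta^o(A)^{B'}$ is then immediate. To obtain $\theta$, first apply Lemma \ref{C:C1} to get a formula $\psi(x;y_1,\ldots,y_d) \in \Sigma$ with $\gc_\psi^o$ finitely characterized by $\eta$ over $\QQ$. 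I would then let $\theta$ be the literal translation of $\psi$ into $L'$: replace each atomic subformula of $\psi$ by the $L'$ atom obtained by substituting $\xbar$ for $x$ and $\ybar_i$ for $y_i$ (so $x<y_i$ becomes $\xbar \prec \ybar_i$, a relation $y_i<y_j$ becomes $\ybar_i \prec \ybar_j$, and so on), and each equality by the conjunction of coordinatewise equalities of the corresponding tuples. Because $L'$ contains only $\prec$ together with equality, $\theta$ is quantifier free of the required shape, with $|\ybar_i|=|\xbar|$.

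Next I would fix the parameters and the order. Embed the index order $(I,<)$ into a dense linear order without endpoints $(J,<)$ whose complement $J\setminus I$ meets every nonempty open interval, so that there is always room strictly between consecutive points of $I$. Using that $\mcm$ is a monster model, choose distinct points $\cbar_j \in M^{|\xbar|}$ for $j \in J$ with $\cbar_i = \abar_i$ for $i \in I$, interpret $\prec$ on $\{\cbar_j : j \in J\}$ as the order of $J$ (this extends the prescribed interpretation on $A$), and set $B' = \{\cbar_j : j \in J\}$. Note that $A \sse B'$; including the points of $A$ among the admissible parameters is essential, since forbidden labels $\eta$ ending in $1$ (singletons, for example) force a parameter to coincide with a point of $A$.

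It then remains to check that $\gc_\theta^o(A)^{B'}$ is finitely characterized by $\eta$. Fix a finite $A_0 \sse A$. Since $\theta$ is quantifier free in the single relation $\prec$, and $\prec$ restricts on $B'$ to the linear order of $J$, the truth value of $\theta(\abar_i;\cbar_{j_1},\ldots,\cbar_{j_d})$ depends only on the $J$-order type of the finite configuration $\{i,j_1,\ldots,j_d\}$; embedding this configuration order-preservingly into $\QQ$ shows the value coincides with that of $\psi$ on the image. Soundness follows at once: each member of $\gc_\theta^o(A)^{B'}(A_0)$ corresponds to a member of $\gc_\psi^o(X_0)$, where $X_0$ is the image of $A_0$, and hence does not induce $\eta$ on $A_0$. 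For completeness, take a subset of $A_0$ not inducing $\eta$; realize it over $\QQ$ by increasing parameters $a_1<\cdots<a_d$ (possible as $\gc_\psi^o$ is finitely characterized by $\eta$), read off the positions of the $a_i$ relative to $X_0$, and transfer them to $\prec$-increasing parameters $\cbar_{j_1}\prec\cdots\prec\cbar_{j_d}$ in $B'$ occupying the matching positions relative to $A_0$. Density of $J$ with co-density of $J\setminus I$ supplies the $j_i$ falling strictly between consecutive points of $A_0$, while $A\sse B'$ supplies those that must coincide with points of $A_0$. Thus $\gc_\theta^o(A)^{B'}(A_0)$ is exactly the collection of subsets of $A_0$ not inducing $\eta$, so $\gc_\theta^o(A)^{B'}$ is finitely characterized by $\eta$, and the similarity remark yields $\gc_\phi(A)^B \sim \gc_\theta^o(A)^{B'}$.

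The step I expect to be the main obstacle is this completeness direction: arranging $B'$ and the interpretation of $\prec$ so that \emph{every} non-$\eta$-inducing subset of each finite $A_0$ is realized by $\prec$-increasing parameters. The subtlety is that $A$ need not be dense in its own order, so parameters witnessing a trace over $\QQ$ may have to be inserted strictly between consecutive points of $A$; this is exactly what forces the passage to the dense extension $J$, the interleaving of fresh points among the $\cbar_j$, and the observation that parameters coinciding with points of $A$ must also be permitted.
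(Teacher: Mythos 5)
Your proposal is correct and follows essentially the same route as the paper: obtain $\psi \in \Sigma(\eta)$ from Lemma \ref{C:C1}, transliterate it into $L'$ by substituting $\prec$ for $<$ and tuples for single variables, extend $A$ to a dense linear order $D$ inside $M^{|\xbar|}$ so that parameters can be interleaved between (and placed at) points of $A$, and conclude via the common finite characterization by $\eta$. The only cosmetic difference is that the paper takes $B' = D^d$ (the set of $d$-tuples) where you take the underlying point set, and your verification of the completeness direction is spelled out in more detail than the paper's one-line assertion.
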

\begin{proof}
 Let $\eta \in 2^{d+1}$ be the characteristizing forbidden label of $\gc_\phi(A)^B$, and let ${\theta}^*(x;y_1,\ldots,y_d)$ be a quantifier free order formula such that ${\theta}^* \in \Sigma(\eta)$, which exists by Lemma \ref{C:C1}.  Define $\theta(\xbar;\ybar_1,\ldots,\ybar_d)$ by replacing each instance of $<$ in ${\theta}^*$ with $\prec$, each instance of $x$ by $\xbar$ and each instance of $y_i$ by $\ybar_i$.

 Let $D = A \cup C$ where $C \sse M^{|\xbar|}$ and $\prec$ is interpreted on $C$ in such a way as to make $(D,\prec) \models $DLO. Define $B' = D^d$.

Then for any finite $A_0 \sse A$, both $\gc_\phi(A_0)^B$ and $\gc_\theta^o(A_0)^{B'}$ are characterized by $\eta$, and so $\gc_\phi(A)^B \sim \gc_\theta^o(A)^{B'}$.

\end{proof}

We now want to show that for any theory $T$, the property of interpreting some $d$-maximum class is equivalent to interpreting $[\omega]^d$.  Define $\ZZ^* = \{(2i,2i+1): i \in \ZZ\}$.  

\begin{lemma} \label{L:L2}
 Let  $\phi(x;\ybar)$ be a quantifier-free $L=\{<\}$ formula such that $\gc_\phi^o$ is finitely characterized by $\eta \in 2^{d+1}$, and define $$\psi_\phi(x_1,x_2;\ybar) = \neg (\phi(x_1;\ybar) \equiv \phi(x_2;\ybar))$$  Then $\gc_{\psi_\phi}^o(\ZZ^*)^{\QQ^{|\ybar|}} = [\ZZ^*]^{\leq d}$.
\end{lemma}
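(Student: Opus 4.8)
The plan is to prove the equality by two inclusions, after reformulating membership in $\gc_{\psi_\phi}^o(\ZZ^*)$ combinatorially. For increasing parameters $\abar$ and a pair $z=(2i,2i+1)\in\ZZ^*$, we have $\QQ\models\psi_\phi(2i,2i+1;\abar)$ iff exactly one of $2i$ and $2i+1$ lies in $c:=\phi(\QQ;\abar)$; that is, iff $c$ changes value across the gap of the pair. Thus $\psi_\phi(\ZZ^*;\abar)$ records exactly which pairs are separated by $c$, and the lemma amounts to showing that the separation patterns obtainable from members of $\gc_\phi^o$ are precisely the subsets of $\ZZ^*$ of size at most $d$. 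Throughout I use that every $c\in\gc_\phi^o$, being a member of a family finitely characterized by $\eta$, fails to induce $\eta$ on all of $\QQ$: a witness to inducing $\eta$ is a $(d+1)$-element set, and $c$ restricted to it would induce $\eta$ there, contradicting finite characterization.

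For $\gc_{\psi_\phi}^o(\ZZ^*)\subseteq[\ZZ^*]^{\le d}$ I show no $c=\phi(\QQ;\abar)$ separates more than $d$ pairs. Suppose toward a contradiction $c$ separates pairs at indices $i_0<i_1<\cdots<i_d$. For each $j$ the pair $(2i_j,2i_j+1)$ has exactly one element in $c$ and one outside, so I may choose $p_j\in\{2i_j,2i_j+1\}$ with $p_j\in c$ when $\eta(j)=1$ and $p_j\notin c$ when $\eta(j)=0$. Since $2i_j+1<2i_{j+1}$ we get $p_0<p_1<\cdots<p_d$, and by construction $c$ traces $\eta$ on them. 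Hence $c$ induces $\eta$, contradicting the previous paragraph. This is the short direction.

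The reverse inclusion $[\ZZ^*]^{\le d}\subseteq\gc_{\psi_\phi}^o(\ZZ^*)$ is the substantive one: given $S=\{i_1<\cdots<i_k\}$ with $k\le d$, I must produce increasing $\abar$ so that $\phi(\QQ;\abar)$ separates exactly the pairs indexed by $S$ and no others. Here I would pass to the canonical formula $\psi\in\Sigma(\eta)$ of Lemma \ref{C:C1}; since $\gc_\phi^o$ and $\gc_\psi^o$ are both finitely characterized by $\eta$ they are similar, so the separation patterns realizable on finite subsets of $\ZZ^*$ coincide. The canonical formula is built feature-by-feature in Lemma \ref{C:C1}, so $\psi(\QQ;\abar)$ for $\abar=(a_1<\cdots<a_d)$ decomposes into $d$ local features encountered in increasing order, each an interval endpoint (from the $x>y$/$x<y$ steps) or an added/removed point (from the $x=y$/$x\ne y$ steps), each contributing exactly one flip. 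I would place $k$ of them to separate $S$ --- an interval-endpoint feature dropped strictly inside a within-pair gap $(2i_m,2i_m+1)$, or a point feature placed at the integer $2i_m$ --- and place the remaining $d-k$ features in inert positions: interval endpoints inside a between-pair gap $(2i+1,2i+2)$, and point features at non-integers. Inert features introduce no within-pair flip, and the bit-structure of $\eta$ guarantees that the value on either side of each point feature is consistent with its being a genuine insertion or deletion, so the active features separate exactly the intended pairs.

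The main obstacle is controlling the infinitely many pairs not in $S$: I must guarantee $\abar$ separates none of them, not merely those in a finite window. Two facts combine. First, since $\abar$ is finite, $\phi(\QQ;\abar)$ is constant on the rays below $a_1$ and above $a_d$, so any pair lying entirely in one ray is unseparated; confining all features to a bounded interval reduces the possible separations to finitely many pairs. Second, inside that region the placement above leaves exactly $S$ separated. Making this rigorous requires transferring the canonical construction back to $\phi$: because similarity controls only finite restrictions, I would realize the target pattern on a window of integers large enough to contain $S$ together with flat, non-flipping guard pairs at each end, invoke finite characterization together with Lemma \ref{L:978} to obtain genuine increasing parameters realizing it, and use the guard pairs and eventual constancy to exclude separations outside the window. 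Verifying that the point-type features act as insertions or deletions rather than no-ops --- exactly where the polarity dictated by $\eta$ enters --- is the most delicate bookkeeping.
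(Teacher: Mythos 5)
Your first inclusion, $\gc_{\psi_\phi}^o(\ZZ^*)\subseteq[\ZZ^*]^{\le d}$, is correct and takes a genuinely different and more elementary route than the paper: the paper obtains this bound inside an induction on the structure of $\phi$ (peeling off the last parameter and arguing about where it must sit), whereas you get it in one stroke by selecting from each separated pair the element whose membership in $c$ matches the corresponding bit of $\eta$, producing an increasing $(d+1)$-tuple on which $c$ induces $\eta$ and contradicting finite characterization. That argument is clean and complete as stated.

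The reverse inclusion, however, has a genuine gap, and it sits exactly where you locate the ``delicate bookkeeping.'' The explicit feature-placement works for the canonical formula of Lemma \ref{C:C1}, but the lemma is about an arbitrary $\phi$, and similarity only transfers \emph{finite} traces; so, as you say, you must realize the target pattern on a finite window $W$ and then argue that the resulting parameters separate no pair outside $W$. The guard-pair mechanism you propose does not accomplish this. Concretely, take $\eta=\langle 1,0,1\rangle$, so $\gc_\phi^o$ is the family of open intervals, $d=2$, and $S=\{(0,1)\}$ with $k=1<d$. On the window $W=\{-2,-1,0,1,2,3\}$ the trace $c_0=\{1,2,3\}$ does not induce $\eta$, separates exactly the pair $(0,1)$ among the pairs of $W$, and leaves both guard pairs $(-2,-1)$ and $(2,3)$ flat; yet it is realized on $W$ both by $(1/2,7/2)$, which traces $S$ on all of $\ZZ^*$, and by $(1/2,9/2)$, which additionally separates $(4,5)$. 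Finite characterization hands you \emph{some} realization of $c_0$, not one of your choosing, and eventual constancy of $\phi(\QQ;\abar)$ beyond its parameters does not help because the parameters need not lie anywhere near $W$. What is actually required is to choose the window trace so that it pins down the location of every transition of every realization (in the example, $c_0=\{1\}$ forces any realizing interval into $[0,2]$); which traces do this depends on the shape dictated by $\eta$, and supplying that argument uniformly in $\eta$ is the real content of the inclusion. The paper sidesteps the issue entirely by proving the whole lemma as an induction on formulas parallel to Lemma \ref{C:C1}: given parameters tracing $S'$ \emph{exactly} for the shorter formula, indiscernibility moves them below the new pair and the single new parameter is placed explicitly, either in the within-pair gap (to add that pair) or in the preceding between-pair gap (to add nothing), so exact global control is maintained at every step. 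You would either need to adopt that induction or carry out the pinning-down analysis you currently only gesture at.
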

\begin{proof}
 We show this by induction on formulas.  The statement is obvious for the basic formulas.  Suppose the lemma holds for $\phi(x;y_1,\dots,y_n)$, a quantifier-free $L=\{<\}$ formula. By Proposition \ref{P:Pevery}, $\gc_\phi^o$ is finitely characterized by some $\eta \in 2^{d+1}$.  Fix this $\eta$.  We divide the argument into cases depending on $cof(\phi)$.  

Suppose $cof(\phi) = 0$, and consider $\theta(x;y_1,\ldots,y_{n+1}) = \phi(x;\ybar) \vee x > y_{n+1}$.  By the arguments in Lemma \ref{C:C1}, $\gc_\theta^o$ is finitely characterized by $\eta^\frown\langle 0 \rangle$. Define $\psi_\theta(x_1,x_2;\ybar) = \neg (\theta(x_1;\ybar) \equiv \theta(x_2;\ybar))$.

 For any $k\in \omega$, let $B=\{(2i_1,2i_1+1),\ldots,(2i_k,2i_k+1),(2i_{k+1},2i_{k+1}+1)\}$ be given, with $i_1 < \cdots < i_{k+1}$ in $\ZZ$ such that $B\setminus\{(2i_{k+1},2i_{k+1}+1)\}\in \gc_{\psi_\phi}^o(\ZZ^*)$.  We want to show that $B \in \gc_{\psi_\theta}^o(\ZZ^*)$ and $B\setminus\{(2i_{k+1},2i_{k+1}+1)\}\in \gc_{\psi_\theta}^o(\ZZ^*)$.

Let $a_1<\cdots < a_n$ be parameters witnessing that $B\setminus\{(2i_{k+1},2i_{k+1}+1)\}\in \gc_{\psi_\phi}^o(\ZZ^*)$.  By indiscernibility, we may assume $a_n < 2i_{k+1}$.  Then putting $a_{n+1}$ to be the average of $2i_{k+1}$ and $2i_{k+1}+1$ gives a parameter set $a_1<\cdots < a_{n+1}$ witnessing $B \in \gc_{\psi_\theta}^o(\ZZ^*)$.  On the other hand, putting $a_{n+1}$ to be between $2i_{k}+1$ and $2i_{k+1}$ gives a parameter set $a_1<\cdots < a_{n+1}$ witnessing $B\setminus\{(2i_{k+1},2i_{k+1}+1)\}\in \gc_{\psi_\theta}^o(\ZZ^*)$.

It remains to show that there is no $c \in \gc_{\psi_\theta}^o(\ZZ^*)$ with $|c| > d+1$. Suppose there is a sequence of parameters $a_1 < \cdots < a_{n+1}$ such that $\psi_\theta(x_1,x_2;a_1,\ldots,a_{n+1})$ is satisfied by each of $(2i_1,2i_1+1),\ldots,(2i_k,2i_k+1),(2i_{k+1},2i_{k+1}+1)$, with $i_1 < \cdots < i_{k+1}$ for some $k \in \omega$.  Then we must have $a_{n+1}>2i_k+1$, or else $\psi_\phi(2i_{k+1},2i_{k+1}+1;a_1,\ldots,a_{n+1})$ fails.  But then $\psi_\phi(x_1,x_2;a_1,\ldots,a_{n})$ is satisfied by $(2i_1,2i_1+1),\ldots,(2i_k,2i_k+1)$.  Thus $k \leq d$ by inductive hypothesis.

The other cases in the induction are similar and left to the reader. 

\end{proof}

\begin{lemma} \label{L:L3}
 Let $\phi(\xbar;y_1,\ldots,y_n) \in L$, $A\sse M^{|\xbar|}$ and $B \sse M^n$.  Suppose $\gc_{\phi}(A)^{B}$ is infinite and $d$-maximum.  Then there are $A'' \sse M^{|\xbar|}$ and $B' \sse M^n$ with $A''=\{\abar_i:i \in \QQ\}$ and $\eta \in 2^{d+1}$ such that $\gc_{\phi}(A'')^{B'}$ is $d$-maximum and finitely characterized by the forbidden label $\eta$. 
\end{lemma}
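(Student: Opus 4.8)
The plan is to move the given $d$-maximum family onto a $\QQ$-indexed sequence in three steps: first stabilize the forbidden label with Ramsey's theorem, then upgrade the resulting uniform label to finite characterization using maximality and Sauer's Lemma, and finally densify the index set to $\QQ$ by a compactness argument that regenerates an appropriate parameter set. Since $\gc_\phi(A)^B \sse 2^A$ is infinite, the point set $A$ must be infinite, so I may fix a countable subset and a linear order of type $\omega$ on it, $A_0 = \seq{\abar}$. For any finite $X_0 \sse A_0$ we have $\gc_\phi(A_0)^B(X_0) = \gc_\phi(X_0)^B = \gc_\phi(A)^B(X_0)$, which has cardinality $\Phi_d(|X_0|)$ by maximality of $\gc_\phi(A)^B$; hence $\gc_\phi(A_0)^B$ is again $d$-maximum. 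For each increasing $(d+1)$-tuple $S$ from $A_0$ the restriction $\gc_\phi(S)^B$ has $\Phi_d(d+1) = 2^{d+1}-1$ members, so $S$ carries a well-defined forbidden label, which I encode through the order on $S$ as an element of $2^{d+1}$. This is a finite colouring of $[A_0]^{d+1}$, so Ramsey's theorem gives an infinite homogeneous $A^* \sse A_0$ on which the forbidden label is constantly some $\eta \in 2^{d+1}$; by the subsequence argument $\gc_\phi(A^*)^B$ is still $d$-maximum.

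Next I would show $\gc_\phi(A^*)^B$ is finitely characterized by $\eta$. Fix finite $X_0 \sse A^*$ with $|X_0| = m$. If some $c = \phi(X_0;\bbar) \in \gc_\phi(X_0)^B$ induced $\eta$ on a $(d+1)$-subset $S$, then $\phi(S;\bbar) = S_\eta$ would lie in $\gc_\phi(S)^B$, contradicting that $\eta$ is the forbidden label on $S$; hence no member of $\gc_\phi(X_0)^B$ induces $\eta$, and $\gc_\phi(X_0)^B \sse \{c \sse X_0 : c \text{ does not induce } \eta\}$. The family on the right cannot shatter a set of size $d+1$, since that would force $\eta$ to appear, so by Sauer's Lemma it has at most $\Phi_d(m)$ elements. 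As $d$-maximality gives $|\gc_\phi(X_0)^B| = \Phi_d(m)$, the inclusion is an equality and $\gc_\phi(X_0)^B$ is characterized by $\eta$. Since $X_0$ was arbitrary, $\gc_\phi(A^*)^B$ is finitely characterized by $\eta$; note this also records the purely order-theoretic count $|\{c \sse X_0 : c \text{ does not induce } \eta\}| = \Phi_d(m)$.

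Finally I would densify to order type $\QQ$ by compactness in $\mcm$. Introduce point variables $\xbar_j$ for $j \in \QQ$ and, for each finite increasing $\sigma \sse \QQ$ and each trace $c$ not inducing $\eta$, a parameter variable $\ybar_{\sigma,c}$, and write the type asserting that the $\xbar_j$ are distinct, that $\ybar_{\sigma,c}$ witnesses the trace $c$ on the points indexed by $\sigma$, and crucially that each $\ybar_{\sigma,c}$ induces $\eta$ on no increasing $(d+1)$-tuple of the $\xbar$'s. A finite fragment is satisfied in $A^*$: realize its points by an increasing tuple of $A^*$ and each parameter variable by a $\bbar \in B$ witnessing the required $\eta$-avoiding trace, which exists because $\gc_\phi(A^*)^B$ is characterized by $\eta$, and which, as a member of an $\eta$-characterized family, induces $\eta$ on no $(d+1)$-subset of $A^*$, so the last constraints hold. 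Realizing the full type yields $A'' = \{\abar_i : i \in \QQ\}$; letting $B'$ be the set of realized parameters, on every finite $X_0 \sse A''$ the family $\gc_\phi(X_0)^{B'}$ contains every $\eta$-avoiding trace and consists only of such traces. Hence it is characterized by $\eta$ and has exactly $\Phi_d(|X_0|)$ elements by the count above, so $\gc_\phi(A'')^{B'}$ is $d$-maximum and finitely characterized by $\eta$, as required.

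The main obstacle is this last step. The maximum property and finite characterization both refer to the parameter set, so passing to the denser sequence forces me to manufacture a new parameter set $B'$ rather than reuse $B$; the delicate point is the third family of constraints, demanding that the regenerated parameters avoid $\eta$ on \emph{all} tuples, and its finite satisfiability is exactly what finite characterization on $A^*$ supplies.
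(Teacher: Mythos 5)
Your proof is correct and follows essentially the same route as the paper's: Ramsey's theorem to homogenize the forbidden label, a finite-set argument showing the homogeneous family is characterized by $\eta$, and compactness to transfer everything to a $\QQ$-indexed sequence with a freshly manufactured parameter set. The only cosmetic difference is that where the paper invokes maximality of finite maximum classes (Proposition \ref{P:P991}) to see that every $\eta$-avoiding trace is realized, you count via Sauer's Lemma; both arguments are sound, and your version has the small bonus of explicitly recording that the number of $\eta$-avoiding subsets of an $m$-point ordered set is $\Phi_d(m)$, which the paper leaves implicit when concluding that the final family is $d$-maximum.
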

\begin{proof}

Let $\prec$ be any linear ordering of $A$, and define a function $h:[A]^{d+1} \rightarrow 2^{d+1}$ which sends each element of $[A]^{d+1}$ to its forbidden label with respect to the ordering $\prec$.  By Ramsey's theorem, there is an infinite homogeneous $A' \sse A$ with respect to $h$.  Note that $\gc_{\phi}(A')^B$ is $d$-maximum.  

We claim that for every finite $A_0 \sse A'$, $\gc_\phi(A_0)^B$ is characterized by $\eta$. Clearly no set in $\gc_\phi(A_0)^B$ induces $\eta$. By Proposition \ref{P:P991}, $\gc_\phi(A_0)^B$ is $d$-maximal, and so for any $c\sse A_0$ not inducing $\eta$ on $A_0$, $c \in \gc_\phi(A_0)^B$. This completes the claim.

Let $C = \{\cbar_i : i \in \QQ\}$ a new set of constants compatible with $\xbar$ and $P(\ybar)$ a new predicate. For every finite subset $C_0 = \{\cbar_{i_1},\ldots,\cbar_{i_n}\}$ of $C$, with $i_1 < \cdots < i_n$, let $\rho(C_0)$ express that $\gc_\phi(C_0)^{P(\ybar)}$ is characterized by $\eta$. 
The set of sentences $\{\rho(C_0):C_0\sse C,\text{ finite}\}$ is easily seen to be consistent.  Let this be witnessed by a model $\mcn$.  Now $A'' = C^\mcn$ and $B' = P^\mcn$ are as desired.  Since $\mcn$ embeds into the monster model $\mcm$, we are done.

\end{proof}

Recall that we use $dpR(n)$ for $n \in \omega$ to refer to the maximum depth of an ICT pattern in $n$ variables.

\begin{theorem} \label{T:T2}
 For any theory $T$ and $n,d \in \omega$ the following are equivalent.
\begin{enumerate}
 \item $T$ interprets an infinite $d$-maximum VC family in $n$ parameters.
 \item $T$ interprets $[\omega]^d$ in $n$ parameters.
 \item dpR($n) \geq d$.
\end{enumerate}
\begin{proof}
 The direction (2) $\rightarrow$ (1) is clear, since $[\omega]^d$ is an infinite $d$-maximum VC family.  

(1) $\rightarrow$ (2):  

Let $\gc_{\phi}(A)^B$ with $\phi(\xbar;y_1,\ldots,y_n)$, $A\sse M^{|\xbar|}$ and $B \sse M^n$ constitute an interpretation of some  infinite $d$-maximum family.  By Lemma \ref{L:L3}, we may assume that $A=\{\abar_i:i\in\QQ\}$ and that $\gc_{\phi}(A)^B$ is characterized by a forbidden label $\eta \in 2^{d+1}$.

  Let $\theta(\xbar;\ybar_1,\ldots,\ybar_d)$ be an $L'$ formula as in the statement of Corollary \ref{C:C2}.  Then $\gc_\phi(A)^B \sim \gc_\theta^o(A)^{A^d}$, since for any finite $A_0 \sse A$, both $\gc_\phi(A_0)^B$ and $\gc_\theta^o(A_0)^{A^d}$ are characterized by $\eta$.

Define $A' = \langle (\abar_{2i},\abar_{2i+1}): i \in \ZZ\rangle$.  Define $\psi_\phi(\xbar_1,\xbar_2;y_1,\ldots,y_n) = \neg(\phi(\xbar_1;y_1,\ldots,y_n) \equiv \phi(\xbar_2;y_1,\ldots,y_n))$, and let $\psi_\theta(\xbar_1,\xbar_2;\ybar_1,\ldots,\ybar_d) = \neg(\theta(\xbar_1;\ybar_1,\ldots,\ybar_d) \equiv \theta(\xbar_2;\ybar_1,\ldots,\ybar_d))$.  Then $\gc_{\psi_\phi}(A')^B \sim \gc_{\psi_\theta}^o(A')^{A^d}$.  But by Lemma \ref{L:L2}, $\gc_{\psi_\theta}^o(A')^{A^d} = [A']^{\leq d}$, and therefore  $\gc_{\psi_\phi}(A')^B \sim [A']^{\leq d}$.

By compactness, we can find a countably infinite $A''$ and a set $B' \sse M^n$ such that $\gc_{\psi_\phi}(A'')^{B'} = [A'']^d$.  Thus we have an interpretation of $[\omega]^d$ in $n$ parameters.

(2) $\rightarrow$ (3):  Suppose there is a formula $\phi(\xbar;\ybar)$ with $|\ybar| =n$ and infinite sets $A\sse M^{|\xbar|}$, $B \sse M^n$ such that $\gc_\phi(A)^B = [A]^d$.  Let $\Gamma$ be a set of sentences 
 expressing that $\{\psi_1(\ybar;\xbar_1),\ldots,\psi_d(\ybar;\xbar_d)\}$ witnesses a depth $d$ ICT pattern with $\psi_i(\ybar;\xbar_i) = \phi(\xbar_i;\ybar)$ for $i=1,\ldots,d$.  Then by compactness and choice of $\phi$, $\Gamma$ is consistent and consequently dpR($n) \geq d$.

(3) $\rightarrow$ (2):  Suppose the tuples $\langle \bbar_{i,j}: i \leq d, j < \omega\rangle$ and the formulas $\psi_1(\xbar;\ybar_1),\ldots,\psi_d(\xbar;\ybar_d)$ constitute a depth $d$ ICT pattern in $T$ with $|\bbar_{i,j}| = |\ybar_i|$ and $|\xbar| = n$.  Define $$\phi(\xbar;\ybar_1,\ldots,\ybar_d) = \neg( \psi_1(\xbar;\ybar_1) \equiv \cdots \equiv \psi_d(\xbar;\ybar_d))$$

Let $A = \{\bbar_{1,j}{}^{\frown}\cdots{}^{\frown}\bbar_{d,j} : j < \omega\}$.  Then with $\phi^*(\ybar_1,\ldots,\ybar_d;\xbar) = \phi(\xbar;\ybar_1,\ldots,\ybar_d)$, there is clearly a set $B \sse M^n$ such that $\gc_{\phi^*}(A)^B = [A]^d$.  Thus we have an interpretation of $[\omega]^d$ in $n$ parameters.

\end{proof}

\end{theorem}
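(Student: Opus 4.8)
The plan is to establish the cycle $(2)\rightarrow(1)\rightarrow(2)$ together with $(2)\leftrightarrow(3)$, from which all three conditions are equivalent. The implication $(2)\rightarrow(1)$ requires almost nothing: $[\omega]^d$ is itself infinite and $d$-maximum, since intersecting a $d$-element set with a finite $A$ of size $m$ yields exactly the subsets of $A$ of size at most $d$, of which there are $\sum_{i=0}^{d}\binom{m}{i}=\Phi_d(m)$. Thus any interpretation of $[\omega]^d$ already interprets an infinite $d$-maximum family.

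The implication $(1)\rightarrow(2)$ is where I expect the real work to lie, and I would drive it by chaining the structural results of Section \ref{S:333}. Beginning with an interpretation $\gc_\phi(A)^B$ of an infinite $d$-maximum family, first apply Lemma \ref{L:L3} to replace $A$ by a $\QQ$-indexed sequence $A=\{\abar_i:i\in\QQ\}$ on which $\gc_\phi(A)^B$ is finitely characterized by a single forbidden label $\eta\in 2^{d+1}$; this is the step using Ramsey's theorem to homogenize the forbidden-label function, together with Proposition \ref{P:P991} to pass from maximum to maximal and hence to characterized on finite sets, and a compactness argument to transport everything onto a dense order. Next invoke Corollary \ref{C:C2} to produce a quantifier-free order formula $\theta$ (transported to the relation $\prec$ on the $\xbar$-sort) with $\gc_\phi(A)^B\sim\gc_\theta^o(A)^{A^d}$, the similarity holding because both families are characterized by the same $\eta$ on every finite subset.

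The final move is the doubling trick. I would pass to the sequence of pairs $A'=\langle(\abar_{2i},\abar_{2i+1}):i\in\ZZ\rangle$ modeled on $\ZZ^*$ and form the difference formulas $\psi_\phi(\xbar_1,\xbar_2;\ybar)=\neg(\phi(\xbar_1;\ybar)\equiv\phi(\xbar_2;\ybar))$ and the analogous $\psi_\theta$. Since similarity is preserved by this construction, $\gc_{\psi_\phi}(A')^B\sim\gc_{\psi_\theta}^o(A')^{A^d}$, and Lemma \ref{L:L2} identifies the right-hand side as exactly $[A']^{\leq d}$, giving $\gc_{\psi_\phi}(A')^B\sim[A']^{\leq d}$. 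A closing compactness argument then produces a countably infinite $A''$ and a parameter set $B'$ with $\gc_{\psi_\phi}(A'')^{B'}=[A'']^d$, an honest interpretation of $[\omega]^d$ in $n$ parameters. The subtle point here, and the main obstacle throughout, is that similarity only guarantees agreement of traces on finite subsets, so the finite agreements must be stitched into one global interpretation by realizing an appropriate type.

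For the ICT equivalences I would argue both directions of $(2)\leftrightarrow(3)$ concretely. Given $\gc_\phi(A)^B=[A]^d$ with $|\ybar|=n$, set $\psi_i(\ybar;\xbar_i)=\phi(\xbar_i;\ybar)$ for $i=1,\dots,d$ and take the array entries to be distinct points of $A$; for any path the diagonal is a $d$-element subset of $A$, so $[A]^d$ supplies a parameter selecting exactly it and no other entry, which (with compactness filling out the columns) witnesses a depth-$d$ ICT pattern and yields $\mathrm{dpR}(n)\geq d$. Conversely, from a depth-$d$ pattern with formulas $\psi_1,\dots,\psi_d$ and array $\bbar_{i,j}$, concatenate columns into points $A=\{\bbar_{1,j}{}^\frown\cdots{}^\frown\bbar_{d,j}:j<\omega\}$ and use $\phi(\xbar;\ybar_1,\dots,\ybar_d)=\neg(\psi_1(\xbar;\ybar_1)\equiv\cdots\equiv\psi_d(\xbar;\ybar_d))$. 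An injective path hitting columns $\{j_1,\dots,j_d\}$ is realized by some $\cbar$ under which exactly one $\psi_i$ holds at each chosen column and none holds off it, so the swapped formula $\phi^*$ evaluates true precisely on those $d$ columns; as the path ranges over all injective assignments this realizes every member of $[A]^d$, interpreting $[\omega]^d$ in $n$ parameters.
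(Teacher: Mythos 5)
Your proposal is correct and follows essentially the same route as the paper: $(2)\rightarrow(1)$ directly, $(1)\rightarrow(2)$ via Lemma \ref{L:L3}, Corollary \ref{C:C2}, the doubling construction with the symmetric-difference formula and Lemma \ref{L:L2}, then compactness, and the two concrete ICT arguments for $(2)\leftrightarrow(3)$. The only differences are expository (you spell out why $[\omega]^d$ is $d$-maximum and why the path/column bookkeeping works), so there is nothing to change.
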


Naturally any infinite set can be substituted for $\omega$ in Theorem \ref{T:T2}.

\section{Relations to other notions}
 In this section we relate Theorem \ref{T:T2} to some results of others.  

\begin{definition}
For a formula $\phi(\xbar;\ybar)$ let $max(\phi)$ be defined as the maximum $d\in \omega$, should it exist, for which $\gc_\phi(A)^B$ is $d$-maximum, for some infinite $A \sse M^{|\xbar|}$ and $B \sse M^{|\ybar|}$.  If no such $d$ exists, put $max(\phi) = \infty$.  For $n \in \omega$ let $max(n) = sup\{max(\phi(\xbar;\ybar)):|\ybar|=n\}$.
\end{definition}

We may summarize Theorem \ref{T:T2} by the statement $max(n)=dpR(n)$ for all $n \in \omega$.  

\begin{lemma}[Theorem 2.7 of \cite{ItOnUs11}]
 If $dpR(1) \leq n$ then $dpR(k) \leq kn$ for all $k \in \omega$.
\end{lemma}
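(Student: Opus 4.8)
The plan is to step outside the $[\omega]^d$ picture of Theorem~\ref{T:T2}, which does not see the additive structure, and argue instead through the mutually-indiscernible-sequence reformulation of dp-rank on which \cite{ItOnUs11} is built. First I would record the reformulation: $dpR(\xbar)\geq\kappa$ precisely when there are sequences $\langle I_t:t<\kappa\rangle$ that are mutually indiscernible (each indiscernible over the union of the others) together with a tuple $\abar$ of sort $\xbar$ such that no $I_t$ is indiscernible over $\abar$; equivalently $dpR(\xbar)<\kappa$ iff in every such family some $I_t$ stays indiscernible over $\abar$. Passing between this and the ICT-pattern definition is routine: one extracts mutually indiscernible sequences from the $\omega$ columns of each row by Ramsey and compactness, and conversely reads a pattern off the first point of disorder. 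From the reformulation one gets for free the bound that, for \emph{any} mutually indiscernible family and any tuple $\bbar$, at most $dpR(\bbar)$ of the $I_t$ fail to be indiscernible over $\bbar$ --- otherwise the failing subfamily, still mutually indiscernible, would witness a larger dp-rank. Granting all this, the lemma reduces by induction on coordinates to the binary case $dpR(\abar\cbar)\leq dpR(\abar)+dpR(\cbar)$, since peeling off one coordinate $a_j$ at a time costs at most $dpR(a_j)\leq dpR(1)\leq n$ and there are $k$ of them.

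For the binary case I would take mutually indiscernible $\langle I_t:t<\kappa\rangle$ with $\kappa>dpR(\abar)+dpR(\cbar)$ and try to locate a single $I_t$ indiscernible over the whole of $\abar\cbar$. The cheap count discards the at-most-$dpR(\abar)$ sequences not indiscernible over $\abar$ and the at-most-$dpR(\cbar)$ not indiscernible over $\cbar$, leaving a survivor indiscernible over $\abar$ and over $\cbar$ \emph{separately}. This is not yet enough, because separate indiscernibility over each half does not give indiscernibility over the pair. To bridge the gap I would, after removing the sequences bad over $\abar$, extract from the remainder a family that is mutually indiscernible \emph{over} $\abar$, and then run the cheap count a second time over the base $\abar$ applied to the tuple $\cbar$; a sequence surviving both rounds is indiscernible over $\abar\cbar$, which contradicts a hypothetical witness of $dpR(\abar\cbar)\geq\kappa$ and forces $\kappa\leq dpR(\abar)+dpR(\cbar)$.

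The crux, and the reason additivity is not a one-line count, is precisely this bridging step. The phenomenon that a sequence can be indiscernible over $\abar$ and over $\cbar$ yet disordered over $\abar\cbar$ is genuine, so the extraction to mutual indiscernibility over $\abar$ is unavoidable; and that extraction replaces the sequences, so one must carry it out without collapsing the configuration and, more delicately, must bound the second-round failures by $dpR(\cbar)$ rather than by some inflated rank of $\cbar$ relative to the enlarged base $\abar$. Controlling this base-sensitivity of dp-rank is the substantive work, and it is exactly what \cite{ItOnUs11} supplies; I would quote their version of the extraction/shrinking lemma rather than reprove it, treating the present statement as the specialization of their additivity theorem to the hypothesis $dpR(1)\leq n$.
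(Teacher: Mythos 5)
The paper offers no proof of this lemma at all --- it is imported verbatim as Theorem 2.7 of \cite{ItOnUs11} --- and your argument likewise defers the one genuinely hard step (re-extracting mutual indiscernibility over $\abar$ and bounding the second-round failures by $dpR(\cbar)$) to that same reference, so in effect you take the same route the paper does. Your surrounding outline of the subadditivity argument via mutually indiscernible sequences is accurate and correctly locates the crux, so there is nothing to object to.
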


\begin{corollary}
 If $max(1) \leq n$ then $max(k) \leq kn$ for all $k \in \omega$.
\end{corollary}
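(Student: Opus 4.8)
The plan is to derive this corollary directly from the quoted lemma by passing through the identity $max(m) = dpR(m)$ for all $m \in \omega$, which is precisely the summary of Theorem \ref{T:T2}. No new combinatorics is required; the corollary simply transports the inequality for $dpR$ across the dictionary $max = dpR$ to the corresponding inequality for $max$.

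Concretely, I would proceed in three short steps. First, assuming $max(1) \leq n$, apply Theorem \ref{T:T2} at argument $1$ to rewrite the hypothesis as $dpR(1) \leq n$. Second, feed this into the quoted lemma (Theorem 2.7 of \cite{ItOnUs11}) to obtain $dpR(k) \leq kn$ for every $k \in \omega$. Third, apply Theorem \ref{T:T2} once more, now at argument $k$, to conclude $max(k) = dpR(k) \leq kn$, which is the desired inequality.

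Because every step is an instance of an already-established equality or of the cited external lemma, there is no genuine obstacle here: the entire substance of the result lives in Theorem \ref{T:T2}, which identifies the maximum-VC invariant with dp-rank, and in the sub-additivity lemma for $dpR$. The corollary is best read as the observation that $max$ inherits the sub-additivity of $dpR$ under this identification. The one point worth stating explicitly is that the identity $max = dpR$ is needed simultaneously at both arguments $1$ and $k$, and this is available because Theorem \ref{T:T2} asserts it uniformly for all $n \in \omega$.
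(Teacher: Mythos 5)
Your proof is correct and is exactly the argument the paper intends: the corollary is stated without proof immediately after the remark that Theorem \ref{T:T2} can be summarized as $max(n)=dpR(n)$ for all $n$, so the translation of the cited dp-rank lemma through that identity (applied at both arguments $1$ and $k$) is precisely the paper's route.
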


In particular, for any dp-minimal theory and any $n \in \omega$, $max(n) = n$.  

If $VC_{ind}$-density is defined as in \cite{GuHi11}, and $\phi^*(\ybar;\xbar) = \phi(\xbar;\ybar)$ is the dual formula, then for any $\phi(\xbar;\ybar)$, $max(\phi^*) \leq VC_{ind}$-density of $\phi$.  This can be seen by using Lemma \ref{L:L3} and applying Ramsey's Theorem.  It seems plausible that the converse may hold as well, though this would appear to require some work.  

The following is an easy variation of Theorem 3.14 of \cite{Gu11}.

\begin{theorem}
 If $\phi(\xbar;\ybar)$ has $max(\phi^*) = 1$, then $\phi$ has UDTFS.
\end{theorem}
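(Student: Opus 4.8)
The plan is to recast UDTFS for $\phi$ as a statement about the dual family $\gc_{\phi^*}$ and then read off an explicit finite list of definition schemes from the forbidden-label normal form supplied by Section~\ref{S:333}. First I would record the basic correspondence: for a finite $B \sse M^{|\ybar|}$ and any $\abar \in M^{|\xbar|}$, the $\phi$-type $\mathrm{tp}_\phi(\abar/B)$ is completely coded by the set $\phi^*(B;\abar)=\{\bbar \in B : \mcm\models\phi(\abar;\bbar)\}$, which is exactly a member of $\gc_{\phi^*}(B)$. Thus proving UDTFS amounts to producing finitely many $L$-formulas $\delta_1(\ybar;\zbar),\ldots,\delta_m(\ybar;\zbar)$, with $\zbar$ of $\ybar$-sort, so that every member of $\gc_{\phi^*}(B)$ is cut out on $B$ by some $\delta_j$ with parameters drawn from $B$ itself.

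Next I would extract the normal form from the hypothesis $max(\phi^*)=1$. By Lemma~\ref{L:L3} any infinite maximum restriction of $\gc_{\phi^*}$ may be replaced by a copy indexed by $\QQ$ that is finitely characterized by a forbidden label; and since an infinite $2$-maximum family would be characterized by a label in $2^{3}$, the hypothesis $max(\phi^*)=1$ forbids this, so on an indiscernible sequence the relevant labels all lie in $2^{1}\cup 2^{2}$. By Corollary~\ref{C:C2} the dual sets are therefore, up to similarity, of one of the six order-definable shapes listed in Table~\ref{T:og}: the whole set, the empty set, a singleton, a co-singleton, an initial segment, or a final segment. The crucial observation is that on such a sequence the artificial order $\prec$ is recovered inside $L$: since (say) an initial-segment dual family is downward $\prec$-closed, $\bbar\prec\bbar'$ forces $\{\abar:\mcm\models\phi(\abar;\bbar')\}\sse\{\abar:\mcm\models\phi(\abar;\bbar)\}$, so $\prec$ agrees, possibly after reversal and modulo the equivalence ``same primal set'', with the $L$-definable quasi-order $\bbar\leq_\phi\bbar' \equiv \forall\xbar(\phi(\xbar;\bbar)\rightarrow\phi(\xbar;\bbar'))$. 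This lets me fix, once and for all, the finite list of candidate schemes $\ybar=\ybar$, $\ybar\neq\ybar$, $\ybar=\zbar$, $\ybar\neq\zbar$, $\ybar\leq_\phi\zbar$, and $\zbar\leq_\phi\ybar$, each an $L$-formula using a single parameter $\zbar$ from $B$.

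Then I would verify that these schemes define every type over every finite $B$, not merely over indiscernible ones. This I would do by contradiction and compactness: if for arbitrarily large finite $B$ some $\phi$-type over $B$ were cut out by none of the six schemes with $B$-parameters, then a Ramsey-plus-compactness extraction---exactly as in the proofs of Lemmas~\ref{L:L3} and~\ref{L:L2}---would produce an infinite indiscernible sequence on which a dual set fails to be a principal $\leq_\phi$-up/down-set, hence exhibits a label outside $2^{1}\cup 2^{2}$, and therefore, running the construction of Lemma~\ref{L:L2}, an infinite $2$-maximum subfamily of $\gc_{\phi^*}$. This contradicts $max(\phi^*)=1$. Since the whole argument is an easy variation of Theorem~3.14 of \cite{Gu11}, the same reduction can instead be quoted directly: $max(\phi^*)=1$ forces the VC density of $\phi^*$ to be at most $1$ (via Ramsey, as in the remark preceding this theorem), which is precisely the input Guingona's argument consumes.

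The step I expect to be the main obstacle is the passage from the clean indiscernible picture to arbitrary finite $B$. The order-theoretic normal form of Corollary~\ref{C:C2} is only guaranteed on indiscernible sequences, whereas UDTFS quantifies over \emph{all} finite parameter sets, on which $\leq_\phi$ need not even be linear. Making the compactness and Ramsey reduction airtight---so that any genuine failure over some finite $B$ is amplified into a forbidden $2$-maximum configuration for $\phi^*$ rather than dissipating---is the technical heart of the proof; everything else is bookkeeping with the six shapes and their evident $L$-definitions.
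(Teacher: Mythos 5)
The paper gives no proof of this theorem at all---it is stated bare, with only the preceding sentence (``an easy variation of Theorem 3.14 of \cite{Gu11}'') standing in for an argument---so your proposal must be judged on its own terms, and as written it has two genuine gaps. The first is that your closing shortcut reverses an inequality: the remark immediately preceding the theorem establishes $max(\phi^*) \leq VC_{ind}$-density of $\phi$, and the paper explicitly says the \emph{converse} ``would appear to require some work.'' So $max(\phi^*)=1$ does not, by anything proved in this paper, force the dual VC density down to $1$, and you cannot simply hand Guingona's theorem the hypothesis it consumes. The second, more serious, gap is the amplification step of your direct argument. From a failure of your six schemes over arbitrarily large finite $B$ you extract an indiscernible sequence on which some dual trace ``exhibits a label outside $2^{1}\cup 2^{2}$,'' and conclude that $\gc_{\phi^*}$ contains an infinite $2$-maximum subfamily, contradicting $max(\phi^*)=1$. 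But forbidden labels are only defined for families that are already maximum, and $2$-maximality is an exact counting condition ($|\gc(A)|=\Phi_2(|A|)$ for \emph{every} finite $A$): a dual family can shatter pairs, have large alternation number, or realize traces of many different order-theoretic shapes on an indiscernible sequence without containing any infinite $2$-maximum subfamily. So the configuration you produce simply does not contradict the hypothesis. This is precisely the difficulty the paper itself flags as open, and Ramsey plus compactness alone does not bypass it.

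What a correct proof would have to do is rerun Guingona's induction directly under the weaker hypothesis $max(\phi^*)=1$, invoking Lemma \ref{L:L3} and Corollary \ref{C:C2} only at points where a maximum restriction of $\gc_{\phi^*}$ is already in hand, rather than trying to manufacture one from the failure of a candidate list of definition schemes. Your first two steps (coding $\phi$-types by $\gc_{\phi^*}(B)$, and the six shapes from Table \ref{T:og} on an indiscernible sequence, with $\prec$ recovered via $\leq_\phi$) are reasonable and in the spirit of Section \ref{S:333}, and you are right to identify the passage from the indiscernible picture to arbitrary finite $B$ as the technical heart; but the proposal does not actually cross that bridge.
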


Whether the corresponding statement holds for $max(\phi^*) = 2$ is an interesting open question.

\bibliographystyle{amsplain}
\small\bibliography{refs.bib}

\end{document}